\setlist[enumerate]{label=\rm{(\arabic*)}}
\theoremstyle{plain}
\newtheorem{thm}{Theorem}[section]
\newtheorem*{thm*}{Theorem}
\newtheorem{cor}[thm]{Corollary}
\newtheorem{prop}[thm]{Proposition}
\newtheorem*{prop*}{Proposition}
\newtheorem{lemma}[thm]{Lemma}
\theoremstyle{definition}
\newtheorem*{notation}{Notation}
\newtheorem{ex}[thm]{Example}
\newtheorem{defi}[thm]{Definition}
\newtheorem{rem}[thm]{Remark}
\numberwithin{equation}{subsection}
\newcounter{SExactes}
\newcounter{Present}
\DeclareMathOperator{\codim}{codim}
\DeclareMathOperator{\depth}{depth}
\DeclareMathOperator{\Fitt}{Fitt}
\DeclareMathOperator{\Proj}{Proj}
\DeclareMathOperator{\Sym}{Sym}
\renewcommand{\k}{\mathrm{k}}
\renewcommand{\P}{\mathbb{P}}
\newcommand{\mcP}{\mathcal{P}}
\newcommand{\bbB}{\mathbb{B}}
\renewcommand{\O}{\mathcal{O}}
\DeclareMathOperator{\tnS}{S}
\DeclareMathOperator{\tnV}{V}
\newcommand{\V}{\mathbb{V}}
\newcommand{\bbW}{\mathbb{W}}
\newcommand{\IW}{\I_{\mathbb{W}}}
\newcommand{\I}{\mathcal{I}}
\newcommand{\J}{\mathcal{I}_{\PI}}
\newcommand{\IZ}{{\mathcal{I}_{Z}}}
\newcommand{\PI}{\mathbb{X}}
\newcommand{\IPIK}{\I_{\PI,\mfK}}
\newcommand{\PnX}{\mathbb{P}_X^n}
\newcommand{\Pnk}{\P^{n}}
\newcommand{\E}{\mathcal{E}}
\newcommand{\F}{\mathcal{F}}
\newcommand{\K}{\mathcal{K}}
\newcommand{\mfK}{\mathbb{K}}
\newcommand{\mfQ}{\mathcal{Q}}
\newcommand{\p}{p^*}
\newcommand{\Ext}{\mathcal{E}\textnormal{xt}}
\renewcommand{\L}{\mathcal{L}}
\renewcommand{\H}{\mathcal{H}}
\newcommand{\Hom}{\mathcal{H}\textnormal{om}}
\newcommand{\tbH}{{\text{H}}}
\DeclareMathOperator{\tnH}{H}
\newcommand{\mR}{\mathrm{R}}
\newcommand{\IK}{\I_\mfK}
\date{\today}
\title[Resolution of the symmetric algebra of a finite base locus]{Resolution of the symmetric algebra of a finite base locus}
\author{R\'emi Bignalet-Cazalet}
\address{Universit\'e de Bourgogne Franche-Comt\'{e}, Institut de Math\'ematiques de Bourgogne,
9 avenue Alain Savary, 
BP 47870 - 21078 Dijon Cedex, France}
\email{remi.bignalet-cazalet@u-bourgogne.fr}
\keywords{rational maps, Proj of an ideal, symmetric algebra, Koszul hull, subregularity of a resolution, Cohen-Macaulay, Gorenstein}
\subjclass[2010]{
13C14, 
13C40, 
13H10, 
13D02, 
14E05, 
}
\begin{document}
\definecolor{biblio}{rgb}{0,0.65,1}
\definecolor{xdxdff}{rgb}{0.49,0.49,1}
\definecolor{ttttff}{rgb}{0.2,0.2,1}
\definecolor{zzzzff}{rgb}{0.6,0.6,1}

\begin{abstract}
We provide a locally free resolution of the projectivized symmetric algebra of the ideal sheaf of a zero-dimensional scheme defined by $n+1$ equations in an $n$-dimensional variety. The resolution is given in terms of the resolution of the ideal itself and of the Eagon-Northcott complex of the Koszul hull.
\end{abstract}

\maketitle

\section{Introduction}
Consider a rational map $\Phi:\P^n\dashrightarrow\P^n$ with a zero-dimensional base locus $Z$. In order to compute some invariants of $\Phi$, for instance its degree, one should resolve the indeterminacies of $\Phi$, which amounts to blow-up $Z$ or equivalently to work with the Rees algebra of the ideal $I_Z$ of $Z$ in $\P^n$. This is not quite easy in general, however a first step is to take the symmetric algebra $\tnS(I_Z)$ of $I_Z$, this is a larger algebra as the Rees algebra is obtained from it by killing the torsion part. This problem is closely related to the papers \cite{BuChJo2009TorSymAlg} and \cite{BuChSi2010ElimAndNonlinEq} about the torsion of the symmetric algebra. So a natural question is what is the shape of the resolution of $\textnormal{S}(I_Z)$, in particular, is it determined by some process involving the resolution of $I_Z$?

The goal of this paper is to give an affirmative answer to this question. Indeed, we provide a resolution of $\textnormal{S}(I_Z)$ in terms of the pulled-back resolution of the dualizing module of $Z$, up to some shift in degree, and of the Eagon-Northcott complex associated with another still larger algebra, which we call the Koszul hull.

Let us state our results more precisely, in a  geometric fashion. Fix an algebraically closed field $\k$, and let $X$ be an $n$-dimensional smooth quasi-projective variety over $\k$. Let $\mathcal{L}$ be a line bundle over $X$ and let $\tnV$ be an $(n+1)$-dimensional subspace of $\text{H}^0(X,\L)$. The image of the \emph{evaluation map} $\tnV\otimes \L^\vee \rightarrow \O_X$ is an ideal sheaf $\I_Z$ of a closed subscheme $Z$ in $X$. Given a basis $(\phi_0,\ldots,\phi_n)$ of $\tnV$, this provides a rational map $\Phi:X\dashrightarrow \P(\tnV)$ sending  $x\in X$ to $\Big(\phi_0(x):\ldots:\phi_n(x)\Big)$ and defined away from $Z$.

Let $\PI=\P_X(\IZ)$ be the projectivization of the ideal sheaf $\IZ$. The surjection $\tnV\otimes \L^\vee\rightarrow \IZ$ induces a closed embedding $\PI\hookrightarrow\PnX$. The goal of this paper is to establish a locally free resolution of $\PI$ over $\PnX$ under the assumption that $Z$ is zero-dimensional.

Let $p:\PnX\rightarrow X$ be the projective bundle map, $\xi$ be the first Chern class $c_1\Big(\O_{\PnX}(1)\Big)$ of $\O_{\PnX}(1)$ and, depending on the context, $\eta$ be either $c_1(\L)$ or the pull back $\p c_1(\L)$ of $c_1(\L)$ by $p$. Put \[ \mfQ_{i,j}=(\overset{i+1}{\wedge}\tnV)\otimes \O_{\PnX}(-(j+1)\xi-(i-j)\eta) \hspace{0.5cm}\text{for }1\leq i\leq n\text{ and } 0\leq j\leq i-1\]and $\mfQ_i=\overset{i}{\underset{j=0}{\oplus}}\mfQ_{i,j}$. The sheaves $\mfQ_i$ are the terms of the Eagon-Northcott complex associated with a map
\[\psi:\tnV\otimes \O_{\P_X^n}\rightarrow\O_{\P^n_X}(\eta)\oplus\O_{\P^n_X}(\xi).\] The complex takes the form:

\begin{equation}\label{resK}\tag{\text{$\mfQ_\bullet$}}
\begin{tikzcd}[row sep=0em,column sep=1cm,minimum width=2em]
0 \ar{r}& \mfQ_n \ar{r}& \ldots \ar{r}& \mfQ_1 \ar{r}& \O_{\PnX}\\
\end{tikzcd}
\end{equation}
see \cite[2.C]{BrunsVetter1988DetRing} for details about this construction.

Assume $\dim(Z)=0$ and let:
\begin{equation}\label{resZ}\tag{\text{$\mcP_\bullet$}}
\begin{tikzcd}[row sep=0em,column sep=0.8cm,minimum width=2em]
  0 \ar{r}& \mcP_n \ar{r}& \ldots \ar{r}& \mcP_1 \ar{r}& \mcP_0 \ar{r}&\O_{Z} \ar{r}& 0\\
\end{tikzcd}
\end{equation}
be a locally free resolution of $\O_Z$, so here $\mcP_0=\O_X$ and $\mcP_1=\tnV\otimes \L^\vee$. 

Set \[\mcP_i'=\p\mcP_{n+1-i}^\vee\otimes\O_{\PnX}(-n\eta-\xi)\hspace{0.5cm}\text{for }1\leq i\leq n+1\] and let $\J$ be the ideal of $\PI$ into $\PnX$. Our result is the following:
\begin{thm}\label{thmSubLin}
Under the assumption that $\dim(Z)=0$, $\PI$ is Cohen-Macaulay of dimension $n$ and there is a locally free resolution of $\J$ of the following form:
\begin{equation}\label{ESthmSubLin}\stepcounter{SExactes}\tag{R\theSExactes}
\begin{tikzcd}[row sep=3em,column sep=0.5cm,minimum width=2em]
0 \ar{r}& P'_{n+1} \ar{r}& \begin{matrix}
     \mfQ_{n} \\\oplus  \\\mcP'_{n}
\end{matrix} \arrow{r}& \ldots \arrow{r}& \begin{matrix}
     \mfQ_1 \\\oplus  \\\mcP_1'
\end{matrix} \arrow{r}& \I_{\PI} \ar{r}& 0.
\end{tikzcd}
\end{equation}
\end{thm}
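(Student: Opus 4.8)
The plan is to realise \eqref{ESthmSubLin} as the mapping cone (equivalently, the Horseshoe-lemma resolution) attached to a short exact sequence comparing $\J$ with the ideal of the Koszul hull. As the statement is local on $X$ and concerns locally free sheaves, I would first pass to an affine chart $\Spec A\subseteq X$ trivialising $\L$, where $\IZ$ becomes an ideal $I=(f_0,\dots,f_n)\subseteq A$ of height $n$ (since $\dim Z=0$) and $\PnX$ becomes $\P^n_A$. Let $\mfK$ denote the Koszul hull, the subscheme of $\PnX$ cut out by the ideal $\IK$ generated by the $2\times2$ minors $f_iT_j-f_jT_i$ of the matrix of $\psi$. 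These minors are linear syzygies of $I$, so $\IK\subseteq\J$ and there is a short exact sequence
\begin{equation*}
0\longrightarrow \IK\longrightarrow \J\longrightarrow \mathcal T\longrightarrow 0,\qquad \mathcal T:=\J/\IK,
\end{equation*}
whose outer terms I would resolve separately and then glue.

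First I would resolve $\IK$. The $2\times2$ minors of $\psi\colon\tnV\otimes\O\to\O(\eta)\oplus\O(\xi)$ define a subscheme of the expected codimension $n=(n+1)-2+1$: off $Z$ the tautological quotient $\tnV\otimes\O\to\O(\xi)$ is surjective, confining the minor locus to $p^{-1}(Z)$, whose codimension is governed by $\dim Z=0$. The Buchsbaum--Eisenbud acyclicity criterion then shows that the Eagon--Northcott complex \eqref{resK} is a resolution of $\mfK$; in particular $\mfK$ is Cohen--Macaulay of codimension $n$, and the truncated complex $\mfQ_n\to\cdots\to\mfQ_1$ resolves $\IK$.

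The crux is to identify $\mathcal T$ and to resolve it. In $\xi$-degree one, $\J_1=\mathrm{Syz}(f_\bullet)$ while $(\IK)_1$ is the submodule of Koszul syzygies, so $\mathcal T$ is built from the first Koszul homology $H_1(f_\bullet)$. Because $I$ is an almost complete intersection of height $n$ on $n+1$ generators, the Koszul homology vanishes in degrees $\ge2$, and the self-duality of the Koszul complex on $(\phi_0,\dots,\phi_n)$ (whose $p$-th term is $\wedge^p\tnV\otimes\L^{-p}$) identifies $H_1(f_\bullet)$ with the dualizing module $\Ext^n_{\O_X}(\O_Z,\O_X)\otimes\L^{-n}$. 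Since every $f_i$ vanishes along $Z$, the quotient $\mathcal T$ is free along the fibres of $p$, whence
\begin{equation*}
\mathcal T\;\cong\;\p\!\big(\Ext^n_{\O_X}(\O_Z,\O_X)\otimes\L^{-n}\big)\otimes\O_{\PnX}(-\xi).
\end{equation*}
Dualizing \eqref{resZ} (exact off the top, as $\O_Z$ is Cohen--Macaulay), pulling back along the flat map $p$, and twisting by $\O_{\PnX}(-n\eta-\xi)$ then yields exactly $\mcP'_{n+1}\to\cdots\to\mcP'_1\to\mathcal T\to0$. This duality bookkeeping---reconciling the reindexing $n+1-i$ with the twist $-n\eta-\xi$---is the step I expect to be most delicate.

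Finally I would glue. Lifting the inclusion $\IK\hookrightarrow\J$ (equivalently the surjection $\J\twoheadrightarrow\mathcal T$) to a chain map from the truncated Eagon--Northcott complex to the resolution of $\J$, the Horseshoe lemma assembles the termwise sums $\mfQ_i\oplus\mcP'_i$ together with the top term $\mcP'_{n+1}$ into a (generally non-minimal, as $\codim\PI=n$) resolution of $\J$ of the stated shape \eqref{ESthmSubLin}; its exactness is automatic from that of the two building blocks. For Cohen--Macaulayness I would run the local-cohomology sequence of $0\to\mathcal T\to\mfK\to\O_{\PI}\to0$: as $\mfK$ is Cohen--Macaulay of dimension $n$ and $\mathcal T$ is maximal Cohen--Macaulay on its $n$-dimensional support $p^{-1}(Z)$, this forces $\depth\O_{\PI}\ge n-1$, and it is precisely the dualizing-module description of $\mathcal T$ obtained above that makes the induced map on top local cohomology injective, upgrading the bound to $\depth\O_{\PI}=n$ and giving that $\PI$ is Cohen--Macaulay of dimension $n$.
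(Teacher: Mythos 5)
Your proposal follows the same skeleton as the paper's proof: the exact sequence $0\to\I_\mfK\to\J\to\J/\I_\mfK\to 0$, the Eagon--Northcott resolution of $\I_\mfK$, the identification of the quotient with $\p(\omega_Z\otimes\omega_X^\vee)\otimes\O_{\P}(-n\eta-\xi)$ (your final formula agrees with \Cref{resQuotient}, and your Koszul-homology route to it is essentially \Cref{Homology}), and a horseshoe-type gluing. But the two steps you declare automatic are precisely where the paper's real work lies, and both have genuine gaps. First, the gluing: the theorem asserts one \emph{global} complex built from the globally defined sheaves $\mfQ_i$, $\mcP_i'$, so the statement is not local on $X$, and the Horseshoe lemma is not available off the shelf because locally free sheaves are not projective objects in the category of quasi-coherent sheaves --- already on $\P^n_A$ with $A$ affine there are no nonzero projectives, so the failure sits along the $\P^n$-fibres, not along $X$. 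One must lift the surjection $\mcP_1'\to\J/\I_\mfK$ along $\J\to\J/\I_\mfK$, and the obstruction lives in $\tnExt^1\bigl(\mcP_1',\I_\mfK\bigr)=\tnH^1\bigl(\P,\I_\mfK\otimes\mcP_1'^\vee\bigr)$. Killing this group (and its analogues at later stages) is exactly what the paper does, by pushing $\tnH^1$ through the Eagon--Northcott resolution and checking via K\"unneth that the $\xi$-twists in $\mfQ_i\otimes\mcP_1'^\vee$ stay in the range $0,\dots,-(i-1)$ where $\tnH^i\bigl(\Pnk,\O_{\Pnk}(-j)\bigr)=0$. If instead you work chart by chart with graded modules over $A[y_0,\ldots,y_n]$ (where the horseshoe does apply), the differentials depend on choices of liftings that need not agree on overlaps, so you do not recover the global resolution \eqref{ESthmSubLin}.

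Second, Cohen--Macaulayness of $\PI$. Your local cohomology sequence correctly gives $\depth\O_{\PI}\ge n-1$, but the promotion to $n$ is equivalent to the injectivity of $\tnH^n_\m(\J/\I_\mfK)\to\tnH^n_\m(\O_\mfK)$, which by local duality over $\O_{\P}$ is equivalent to $\tnExt^{n+1}_{\O_\P}(\O_{\PI},\omega_\P)=0$, i.e.\ to the very assertion being proved; claiming that the dualizing-module description of $\J/\I_\mfK$ ``makes'' this map injective is circular, not an argument. The paper's engine here is linkage: it first proves that $\mfK$ is Gorenstein with $\omega_\mfK\simeq\p\omega_X\otimes\O_{\P}(n\eta-n\xi)$ (\Cref{resKoszHull}\ref{PropKoszulCano4}, by exhibiting $\mfK$ as a complete intersection in $\P\bigl(\O_{\P}(\eta)\oplus\O_{\P}(\xi)\bigr)$), then shows $(\I_\mfK:\J)\simeq\p\IZ$ (\Cref{quotientIdeals}), so that $\I_{\PI,\mfK}$ and the ideal of $\bbW=p^{-1}(Z)$ are linked in the Gorenstein ring $\O_\mfK$; Cohen--Macaulayness of $\PI$ then follows from that of $\bbW$ by \cite[Theorem 21.23]{eisenbud1995algebra}, and the identification of $\J/\I_\mfK\simeq\omega_\bbW\otimes\omega_\mfK^\vee$ falls out of the same theorem. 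Your proposal bypasses the Gorenstein property of $\mfK$ entirely, and with it the only mechanism on offer for the CM claim. Smaller points: your codimension count for $\mfK$ is wrong as stated (the rank-one locus of $\psi$ is not confined to $p^{-1}(Z)$; over $X\setminus Z$ it is the graph of $\Phi$, and both pieces must be checked to have dimension $n$); your Koszul identification needs, and does not prove, the injectivity of the natural surjection $\p\H_1(\IZ)\otimes\O_{\P}(\eta-\xi)\to\J/\I_\mfK$ (this can be extracted from the exactness of the Eagon--Northcott complex, but it is a real step); and with your stated convention $K_p=\wedge^p\tnV\otimes\L^{-p}$, duality gives $\H_1(\IZ)\simeq\Ext^n(\O_Z,\O_X)\otimes\L^{-(n+1)}$, not $\L^{-n}$ --- two twist slips in your write-up that happen to cancel in the final formula.
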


Denoting by $y_i$ the homogeneous relative coordinates of the projective bundle $\PnX$, we make the following definition.

\begin{defi}
A complex $(\mathcal{R}_\bullet)$ over $\PnX$ is \emph{subregular} if for all $i$ the differential $\mathcal{R}_i\rightarrow \mathcal{R}_{i-1}$ is linear or constant in the $y$ variables.
\end{defi}

Note that we put no conditions on the coordinates of the base variety $X$.
 
With this definition, \Cref{thmSubLin} implies:
\begin{cor}\label{corSubReg}
The ideal $\J$ admits a subregular locally free resolution over $\PnX$.
\end{cor}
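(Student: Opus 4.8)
The plan is to deduce \Cref{corSubReg} by verifying that the explicit resolution \eqref{ESthmSubLin} produced in \Cref{thmSubLin} is subregular, which reduces to a bookkeeping of relative twists. Every locally free sheaf occurring in \eqref{ESthmSubLin} has the form $\p M\otimes\O_{\PnX}(a\xi+b\eta)$ for some sheaf $M$ on $X$ and integers $a,b$; call $a$ its \emph{relative twist}. A morphism $\p M\otimes\O_{\PnX}(a\xi+b\eta)\to\p N\otimes\O_{\PnX}(c\xi+d\eta)$ is a global section of $\p(M^\vee\otimes N)\otimes\O_{\PnX}((c-a)\xi+(d-b)\eta)$, and since $p_*\O_{\PnX}(m\xi)$ is the sheaf of degree-$m$ homogeneous polynomials in $y_0,\dots,y_n$ for $m\geq 0$ and vanishes for $m<0$, such a morphism is homogeneous in $y$ of degree $c-a$ (its coefficients being built from $M$, $N$, $\L$ and the coordinates of $X$, none of which involve $y$), and vanishes when $c-a<0$. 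The relevant relative twists are $-(j+1)$ for $\mfQ_{i,j}$, $-1$ for every $\mcP'_i$, and $0$ for $\I_{\PI}\subset\O_{\PnX}$. Hence it suffices to check that every nonzero component of a differential links summands whose relative twists differ by $0$ or $1$.

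First I would treat the Eagon--Northcott part $\mfQ_\bullet$. In the basis $(\phi_0,\dots,\phi_n)$ of $\tnV$, the map $\psi$ has a matrix whose first row $(\phi_0,\dots,\phi_n)$ is constant in $y$ and whose second row $(y_0,\dots,y_n)$ is linear in $y$; consequently its $2\times 2$ minors $\phi_iy_j-\phi_jy_i$, which realize $\mfQ_1\to\O_{\PnX}$, are linear in $y$, and every higher Eagon--Northcott differential, being linear in the entries of $\psi$, is linear or constant in $y$. In terms of twists, a component $\mfQ_{i,j}\to\mfQ_{i-1,j'}$ has $y$-degree $j-j'$, and only $j'\in\{j,j-1\}$ occurs, so these degrees lie in $\{0,1\}$. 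The differentials $\mcP'_i\to\mcP'_{i-1}$ are duals of $p$-pullbacks of the differentials of $\mcP_\bullet$, twisted by the fixed line bundle $\O_{\PnX}(-n\eta-\xi)$; they involve no $y$ variable and are constant in $y$, consistently with their relative twists being equal.

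It remains to control the maps coupling the two complexes. The coupling maps $\mcP'_i\to\mfQ_{i-1,j}$ have $y$-degree $-(j+1)-(-1)=-j\leq 0$, so they vanish for $j\geq 1$ and are constant for $j=0$; similarly the augmentation components $\mfQ_1\to\I_{\PI}$ and $\mcP'_1\to\I_{\PI}$ have $y$-degree $1$. The one genuinely structural input—and the main point to secure—is that the differential of \eqref{ESthmSubLin} is triangular in the direction $\mcP'_\bullet\to\mfQ_\bullet$, i.e.\ there is no component $\mfQ_i\to\mcP'_{i-1}$: such a map would have $y$-degree $j$, which is $\geq 2$ once $j\geq 2$ and would break subregularity. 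This orientation is exactly what the mapping-cone presentation of \eqref{ESthmSubLin} established in the proof of \Cref{thmSubLin} provides. Granting it, every nonzero entry of every differential is linear or constant in $y$, so \eqref{ESthmSubLin} is subregular, which proves \Cref{corSubReg}.
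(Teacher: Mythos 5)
Your proof is correct and takes essentially the same route as the paper: the paper states \Cref{corSubReg} as an immediate consequence of \Cref{thmSubLin}, the implicit content being exactly your verification that every differential of \eqref{ESthmSubLin} is linear or constant in the $y$ variables. Your twist bookkeeping (maps of relative $\xi$-degree $0$ or $1$, vanishing in negative degree), combined with the block-triangular shape of the patched resolution coming from the horseshoe-type construction in the proof of \Cref{thmSubLin} (coupling maps only in the direction $\mcP'_i\to\mfQ_{i-1}$), supplies precisely the details the paper leaves to the reader.
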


Looking back to the map $\Phi: X\dashrightarrow \P(\tnV)$, our motivation for \Cref{corSubReg} is to study the length of a subscheme obtained as zero locus of a global section of the sheaf $p_*(\O_{\PI}(1)^n)$ and relate it to the topological degree of $\Phi$, see \cite{Dolg2011ClassAlgGeo} for these definitions. \Cref{corSubReg} ensures that all higher direct image sheaves of $p_*$ vanish.

In the last section, we focus on a graded version of this result. Take $R=\k[x_0,\ldots,x_n]$ and $I_Z=(\phi_0,\ldots,\phi_n)$ an ideal generated by $n+1$ homogeneous polynomials of degree $\eta$. The ideal of the symmetric algebra of $I_Z$, denoted by $I_{\PI}$, is a bigraded homogeneous ideal of $S=R[y_0,\ldots,y_n]$. This time we consider the two complexes $(P'_{\bullet})$ and $(Q_{\bullet})$ obtained by taking the graded modules of global sections of $(\mcP'_\bullet)$ and $(\mfQ_\bullet)$. These are $S$-graded subregular complexes. Our result in this setting is the following.

\begin{thm}\label{thmSubRegMod}
Assume $I_Z$ is a graded homogeneous Cohen-Macaulay ideal of dimension $1$, then $\PI$ is Cohen-Macaulay and a minimal bigraded $S$-free resolution of $I_{\PI}$ reads:
\begin{equation}\label{ThResGrad}\stepcounter{SExactes}\tag{R\theSExactes}
\begin{tikzcd}[row sep=3em,column sep=0.5cm,minimum width=2em]
0 \ar{r}& Q''_{n} \ar{r}& \begin{matrix}
     Q''_{n-1} \\\oplus  \\ P''_{n-1}
\end{matrix} \arrow{r}& \begin{matrix}
     Q''_{n-2} \\ \oplus  \\ P''_{n-2}
\end{matrix} \arrow{r}& \ldots \arrow{r}& \begin{matrix}
     Q''_2 \\ \oplus  \\ P''_2
\end{matrix} \arrow{r}& P''_1 \ar{r}& I_{\PI} \ar{r}& 0
\end{tikzcd}
\end{equation}
where \[Q''_i=\overset{n}{\underset{j=1}{\oplus}}Q_{i,j}, \hspace{0.5cm} Q_{i,j}=S\Big(-(i-j)\eta,-j-1)^{\binom{n+1}{i+1}}, \hspace{0.5cm} P''_{i}= P_{i+1}\otimes S(\eta,-1).\]
\end{thm}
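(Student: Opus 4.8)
The plan is to deduce \eqref{ThResGrad} from the sheaf-theoretic resolution \eqref{ESthmSubLin} of \Cref{thmSubLin}, specialised to $X=\Pnk$. In this case $\PnX$ is the product $\Pnk\times\Pnk$, whose bihomogeneous coordinate ring is exactly $S=R[y_0,\dots,y_n]$, the $\xi$-grading recording the $y$-degree and the $\eta$-grading the $x$-degree. I would apply the total-sections functor $\Gamma_*(-)=\bigoplus_{a,b}H^0\big(\PnX,(-)(a\xi+b\eta)\big)$ to \eqref{ESthmSubLin}. Since $\Gamma_*$ carries a line bundle $\O_{\PnX}(c\xi+d\eta)$ to the bigraded free module $S(d\eta,c)$, each term of \eqref{ESthmSubLin} becomes a free $S$-module: one checks termwise that $\Gamma_*\mfQ_i=\bigoplus_j\mfQ_{i,j}$ gives the stated $Q_{i,j}=S(-(i-j)\eta,-j-1)^{\binom{n+1}{i+1}}$ (here $\dim\wedge^{i+1}\tnV=\binom{n+1}{i+1}$, and each relative degree $-(j+1)$ lies in the window $[-n,-1]$), while $\Gamma_*\mcP'_i=\p\mcP_{n+1-i}^\vee\otimes S(-n\eta,-1)$. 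The problem then splits into three tasks: proving that $\Gamma_*$ applied to \eqref{ESthmSubLin} stays exact, identifying its terms with those of \eqref{ThResGrad}, and minimising.

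The main obstacle is the exactness of this complex, because $\Gamma_*$ is only left exact and the free modules above do carry higher cohomology in far-off degrees. The engine here is the Cohen--Macaulayness of $\PI$ furnished by \Cref{thmSubLin}: it makes $\O_{\PI}$, and hence $\IP$, arithmetically Cohen--Macaulay on $\Pnk\times\Pnk$, so that its intermediate cohomology $H^i(\PnX,\O_{\PI}(a\xi+b\eta))$ vanishes for $0<i<n$ and all $(a,b)$, and in particular $\Gamma_*\IP=I_{\PI}$ is the saturated presentation ideal of $\Sym(I_Z)$. I would then break \eqref{ESthmSubLin} into short exact sequences of sheaves and chase the associated long exact cohomology sequences; the vanishing of the intermediate cohomology of the successive syzygy sheaves, together with the relative acyclicity of the line bundles appearing (their $\xi$-degrees all lie in $[-n,-1]$, so $R^ip_*$ is concentrated away from the obstructing range), forces $\Gamma_*$ to preserve exactness. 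Equivalently one may run the hypercohomology spectral sequence of \eqref{ESthmSubLin} and read off its degeneration from these vanishings. This cohomological bookkeeping, resting squarely on the CM hypothesis, is where the real work lies.

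It then remains to identify the terms and to pass to a minimal resolution. The $\mfQ$-part is immediate: $\Gamma_*\mfQ_\bullet$ is the module-level Eagon--Northcott complex with terms $Q''_i=\bigoplus_{j}Q_{i,j}$. For the $\mcP'$-part I would invoke graded local duality for the CM quotient $R/I_Z$: the dual complex $\mathrm{Hom}_R(P_\bullet,R)$, suitably shifted, is a minimal free resolution of the dualizing module $\omega_{R/I_Z}$, so $\p\mcP_{n+1-i}^\vee$ is, up to twist, the pulled-back resolution of $\omega_Z$ announced in the introduction; rewriting it in terms of the resolution $P_\bullet$ of $R/I_Z$ yields $P''_i=P_{i+1}\otimes S(\eta,-1)$, the shift by one index recording that the minimal generators of $I_{\PI}$ are precisely the first syzygies of $I_Z$, i.e. the columns of $P_2$.

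Finally, comparing lengths, the complex $\Gamma_*$ of \eqref{ESthmSubLin} has $n+1$ free terms, whereas a minimal resolution of $I_{\PI}$ must have projective dimension $n-1$; by the subregularity of \Cref{corSubReg} every differential is linear or constant in $y$, and exactly one constant (unit) entry survives, so a single Gaussian-elimination step splits off one rank-one summand, shortens the complex to length $n$, and produces the reindexing visible in \eqref{ThResGrad}. The resulting complex is minimal because all remaining entries lie in the bigraded maximal ideal, and its length $n=\codim_{\,\Pnk\times\Pnk}\PI$ re-proves, via Auslander--Buchsbaum, that $\PI$ is Cohen--Macaulay.
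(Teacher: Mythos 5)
Your proposal has two genuine gaps, and each one sits exactly where the paper has to work hardest. The first is your exactness engine: you deduce arithmetic Cohen--Macaulayness of $\PI$ in $\Pnk\times\Pnk$ from the (geometric) Cohen--Macaulayness supplied by \Cref{thmSubLin}, but that implication is false in general --- a CM scheme need not be ACM (two skew lines in $\P^3$, or a smooth rational quartic curve in $\P^3$, are CM but not ACM). Worse, in the present situation ACM-ness of $\PI$ --- equivalently, saturatedness of $I_{\PI}$ plus vanishing of the intermediate cohomology of $\J$ in all bidegrees, equivalently (Auslander--Buchsbaum) projective dimension $n-1$ for $I_{\PI}$ --- \emph{is} the Cohen--Macaulay assertion of \Cref{thmSubRegMod}; assuming it, and assuming again later that ``a minimal resolution of $I_{\PI}$ must have projective dimension $n-1$'', is circular. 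Nor do the vanishings you actually cite rescue this: relative acyclicity coming from $\xi$-degrees in $[-n,-1]$ only controls $R^ip_*$, whereas $\Gamma_*$-exactness requires $H^1$-vanishing of every syzygy sheaf in \emph{all} bidegrees, and by K\"unneth the free terms themselves have nonvanishing $H^n$ in infinitely many twists, so the hypercohomology spectral sequence does not degenerate for free. The paper never obtains \eqref{resGradNonMin1} by taking sections of \eqref{ESthmSubLin}: it reconstructs everything directly with bigraded modules --- the graded Eagon--Northcott complex \eqref{GrMinResK} resolves the determinantal ideal $I_{\mfK}$, graded linkage (Eisenbud, Theorem 21.23) gives $I_{\PI}/I_{\mfK}\simeq\omega_{R_Z}\otimes S(n(1-\eta)+1,-1)$, and the horseshoe lemma (automatic over $S$, free modules being projective) patches these into the length-$(n+1)$ resolution \eqref{resGradNonMin1}. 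No saturation or $\Gamma_*$ statement is needed for that step; the paper even warns, just before \Cref{pfIX}, that such statements are delicate because $\PI$ is the $\Proj$ of a non-locally-free sheaf.

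The second gap is the minimization. Passing from \eqref{resGradNonMin1} to \eqref{ThResGrad} is not ``a single Gaussian-elimination step splitting off one rank-one summand'': the whole $y$-linear strand must disappear, namely every $Q_{i,0}=S(-i\eta,-1)^{\binom{n+1}{i+1}}$ for $1\le i\le n$ together with $P'_{n+1}$, and simultaneously the dualized terms $P'_i=P_{n+1-i}^\vee\otimes S(-n\eta,-1)$ must be converted into the un-dualized, shifted terms $P''_i=P_{i+1}\otimes S(\eta,-1)$. That conversion is not the formal duality rewriting you invoke: $(P_\bullet^\vee)$ is the minimal resolution of $\omega_{R_Z}$ while $(P_\bullet)$ resolves $I_Z$, and these are different modules with different Betti numbers in general (unless $Z$ is Gorenstein), so no re-indexing identifies the two strands. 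The paper's mechanism is precisely \Cref{pfIX}: $p_*\O_{\PI}(\xi)\simeq\I_Z$ (proved by a diagram chase using torsion-freeness of $\I_Z$), which shows that pushing \eqref{ESthmSubLin} twisted by $\xi$ down to $\Pnk$ yields a \emph{non-minimal} graded free resolution of $I_Z$ whose terms are exactly this strand; uniqueness of minimal resolutions up to homotopy then lets one replace the strand, inside \eqref{resGradNonMin1}, by the corresponding truncation of \eqref{resGradZ} twisted by $S(\eta,-1)$, i.e.\ by the $P''_i$. It is exactly this replacement that drops the length from $n+1$ to $n$, and hence \emph{proves}, rather than uses, that $S/I_{\PI}$ is Cohen--Macaulay.
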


Moreover \Cref{thmSubLin} and \Cref{thmSubRegMod} are sharp in the following sense. If $\dim(Z)>0$, then the resolution of $\PI$ might not be subregular as shown in the following example. This example was explained to us by Aldo Conca.

\begin{ex}
In $\P^3$, consider the zero locus $Z$ of the ideal $I_Z=(-x_2^3x_3 + x_3^4,-x_2^4 - x_3^4,-x_1x_3^3 - x_3^4, x_2^2x_3^2 + x_3^4)$. The ideal $I_Z$ has dimension $2$ over $R=\k[x_0,\ldots,x_3]$, so $\dim(Z)=1$, and a minimal graded free resolution of $I_{\PI}$ reads: 
\begin{center}
\begin{tikzpicture}
  \matrix (m) [row sep=3em,column sep=1em,minimum width=2em]
  {
     \node(y){$0$}; &\node(z){$S(-5,-3)$}; &\node(b){$ \begin{matrix}S(-5,-2) \\ \oplus \\ S(-4,-3)^3\end{matrix}$}; &\node(c){$\begin{matrix} S(-4,-1) \\ \oplus \\ S(-3,-2)^3 \\ \oplus \\ S(-4,-2) \\ \oplus \\ S(-3,-3)  \end{matrix}$}; &\node(d){$\begin{matrix}S(-1,-1)\\ \oplus \\ S(-2,-1)^2 \\ \oplus \\ S(-3,-1)
\end{matrix}$}; &\node(e){$I_{\PI}$}; & \node(f){$0$}; \\};
  \path[-stealth]
  	(y) edge (z)
  	(z) edge (b)
    (b) edge (c)
    (c) edge (d)
    (d) edge (e)
    (e) edge (f);
\end{tikzpicture}
\end{center}
where we wrote the shift in the $y$ variables in the right position. Hence the resolution of $I_{\PI}$ is not subregular.
\end{ex}

The explicit computations given in this paper were made using Macaulay2. The corresponding codes are available on request.

\section{Local resolution of the symmetric algebra}

\subsection{Preliminaries and notation}

For the whole paper, $X$ is a smooth quasi-projective variety, where variety stands here for a reduced connected scheme of finite type. Set $n$ for the dimension of $X$. Let $\IZ=(\phi_0,\ldots,\phi_n)\subset \O_X$ be an ideal sheaf generated by $n+1$ linearly independent global sections of a line bundle $\L$ over $X$ and $\tnV=\textnormal{vect}(\phi_0,\dots,\phi_n)$.

\begin{notation}
We denote by $\P$ the projective bundle $\Proj\Big(\Sym(\O_X(-\eta)^{n+1})\Big)$ with its bundle map $p:\P\rightarrow X$ and relative homogeneous coordinates $y_0,\ldots,y_n$. Here $\Sym(\O_X(-\eta)^{n+1})$ refers to the sheafified symmetric algebra of $\O_X(-\eta)^{n+1}$ and $\P$ is a shorter notation for the relative projective space $\PnX$ in the introduction.

We let $\xi$ be the first Chern class of $\O_{\P}(1)$ and, depending on the context, $\eta$ stands either for $c_1(\L)$ or $\p c_1(\L)$.

For a subscheme $\mathbb{L}$ of $\P$ and any $x\in X$, we denote by $\mathbb{L}_x$ the scheme-theoretic fibre of $p$ restricted to $\mathbb{L}$ above $x$.

Moreover, if $\mathcal{J}$ is an ideal sheaf of a scheme $Y$, $\V(\mathcal{J})$ stands for the subscheme of $Y$ defined by $\mathcal{J}$.
\end{notation}

By definition, $\PI=\Proj\Bigl(\Sym(\IZ)\Bigr)$. Let 
\begin{equation}\label{presentI}\stepcounter{Present}\tag{P\thePresent}
\begin{tikzcd}[row sep=0.1cm,column sep=0.5cm,minimum width=2em]
\mcP_2 \ar{rd}\ar{rr}{M} &                   & \mcP_1 \ar{r}{\Phi}& \IZ \ar{r}&  0\\
                         & \E\ar{ur} \ar{dr} &              &           &   \\
0 \ar{ur}                &                   &  0           &           &
\end{tikzcd}
\end{equation}
be a locally free presentation of $\IZ$ where $\mcP_1=\tnV\otimes \O_X(-\eta)$ and $\Phi=(\phi_0 \; \ldots \; \phi_n)$. The composition of the canonical map $\tnV\otimes \O_{\P}\rightarrow \O_{\P}(\xi)$ sending $\phi_i$ to $y_i$ with the map $\p M:\p\mcP_2\rightarrow\p\mcP_1$ provides a map $\p\mcP_2\rightarrow \O_{\P}(\xi)$ as in the following diagram:
\begin{center}
\begin{tikzpicture}
  \matrix (m) [row sep=0.75cm,column sep=1cm,minimum width=2em]
  {
     \node(c){}; &\node(a12){};  &\node(a13){$\tnV\otimes \O_{\P}$};  \\
     \node(a21){$\p\mcP_2$}; &\node(a22){$\p\E $}; &\node(a23){$\O_{\P}(\xi)$};  \\};
  \path[-stealth]
  	(a22) edge (a13)
 	(a21) edge node[above]{$p^*M$} (a13)
 	(a21) edge (a22)
 	(a22) edge (a23)
 	(a13) edge (a23);
\end{tikzpicture}
\end{center}

So, by \cite[A.III.69.4]{Bourbaki2007Algebre}, $\PI$ is the zero scheme of the corresponding section of the composition map $s\in \tbH^0\Big(\P,\p\mcP_2^\vee\otimes \O_{\P}(\xi)\Big)$. Otherwise stated, the ideal sheaf $\J$ of $\PI$ into $\P$ is generated by the entries of the row matrix $\textbf{y}p^*M$ where $M$ is the presentation matrix appearing in \eqref{presentI} and $\textbf{y}$ stands for $(y_0\;\ldots \;y_n)$. We denote by $M_x$ the matrix obtained from $M$ by specializing at the point $x\in X$.

We emphasize the following remark. Since $\dim(X)=n$ and $\codim(Z,X)=n$, the local structure sheaf of a point $z\in Z$, denoted by $\O_{Z,z}$, is generated by at least $n$ independent sections of $\L$ lying in $\tnV$. The crucial point is to take care of the case where $z\in Z$ is a point at which $Z$ is not a complete intersection, i.e all the sections $\phi_0,\ldots,\phi_n$ are required to generate $\O_{Z,z}$.

\begin{lemma}\label{lemmaFibreNLCI}
Let $x\in X$ be a closed point. The scheme-theoretic fibre $\PI_x$ is:
\begin{enumerate}[label=\rm(\it{\roman*})]
\item\label{lemmaFibreNLCI1} a point if $x\notin Z$,
\item\label{lemmaFibreNLCI2} isomorphic to $\P_x^{n-1}$ if $x\in Z$ and $Z$ is a local complete intersection at $x$,
\item\label{lemmaFibreNLCI3} isomorphic to $\P_x^{n}$ if $x\in Z$ and $Z$ is not a local complete intersection at $x$.
\end{enumerate}

In general, $\PI_x$ is isomorphic to $\P_x^{n-r}$ where $r=\textnormal{rank}(M_x)$.

\end{lemma}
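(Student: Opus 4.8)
The plan is to reduce the whole statement to linear algebra over the residue field $\k(x)=\k$ together with a computation of the minimal number of generators of the stalk $\I_{Z,x}$. First I would pin down the scheme structure of the fibre. By construction $\J$ is generated by the entries of the row matrix $\mathbf{y}\,\p M$, with $\mathbf{y}=(y_0\;\cdots\;y_n)$, so restricting to the fibre $\PI_x=\PI\times_X\Spec\k(x)$ inside $\P_x=p^{-1}(x)\cong\P^n_\k$ specializes $\p M$ to the constant matrix $M_x$ and exhibits $\PI_x$ as the closed subscheme of $\P^n_\k$ cut out by the entries of $\mathbf{y}M_x$. These entries are linear forms in $y_0,\ldots,y_n$ whose coefficient vectors are the columns of $M_x$; hence they span a $\k$-subspace of $\tbH^0\big(\P^n_\k,\O_{\P^n}(1)\big)$ of dimension $r=\rk(M_x)$ and generate the same homogeneous ideal as $r$ independent linear forms. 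The vanishing locus of $r$ independent linear forms in $\P^n_\k$ is a reduced linear subspace $\P^{n-r}_x$, which already proves the final assertion $\PI_x\cong\P^{n-r}_x$.

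It then remains to compute $r=\rk(M_x)$ geometrically. Tensoring the presentation \eqref{presentI} with $\k(x)$ and using right-exactness of $-\otimes\k(x)$ gives the exact sequence
\[\mcP_2\otimes\k(x)\xrightarrow{\,M_x\,}\mcP_1\otimes\k(x)\xrightarrow{\,\Phi_x\,}\IZ\otimes\k(x)\rightarrow 0.\]
Since $\mcP_1=\tnV\otimes\O_X(-\eta)$ has rank $n+1$ and $\dim_\k\big(\IZ\otimes\k(x)\big)$ equals, by Nakayama, the minimal number of generators $\mu(\I_{Z,x})$ of the stalk, counting the dimensions of the images yields $r=(n+1)-\mu(\I_{Z,x})$.

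Finally I would read off $\mu(\I_{Z,x})$ case by case. If $x\notin Z$ then $\I_{Z,x}=\O_{X,x}$ is the unit ideal, so $\mu=1$, $r=n$, and $\PI_x$ is a point. If $x\in Z$ then $Z$ has codimension $n$ at $x$, so Krull's height theorem forces $\mu\geq n$, while the $n+1$ global generators give $\mu\leq n+1$; thus $\mu\in\{n,n+1\}$. As $\O_{X,x}$ is regular, $\I_{Z,x}$ is generated by a regular sequence of length $n$ exactly when $\mu=n$, i.e. exactly when $Z$ is a local complete intersection at $x$, whence $r=1$ and $\PI_x\cong\P^{n-1}_x$; otherwise $\mu=n+1$, so $M_x=0$ and $\PI_x\cong\P^{n}_x$. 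The one genuinely delicate point is the first step: one must justify that passing to the scheme-theoretic fibre replaces $\J$ by the ideal generated by the entries of $\mathbf{y}M_x$ (a compatibility of base change with the chosen generators), after which the argument is pure linear algebra together with the standard regular-sequence characterization of local complete intersections in a regular local ring.
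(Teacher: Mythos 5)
Your proof is correct and uses essentially the same ingredients as the paper's: right-exactness of $-\otimes\k(x)$ applied to the presentation \eqref{presentI} (equivalently, base change for the symmetric algebra), Nakayama's lemma identifying $\dim_{\k}\bigl(\IZ\otimes\k(x)\bigr)$ with the minimal number of generators $\mu(\I_{Z,x})$, and the characterization of the local complete intersection case by $\mu(\I_{Z,x})=n$. The only difference is organizational: you establish the general statement $\PI_x\simeq\P_x^{n-r}$ first, viewing the fibre as the linear subspace cut out by the entries of $\mathbf{y}M_x$, and then deduce the three cases, whereas the paper handles the three cases directly via $\PI_x\simeq\P(\IZ\otimes\k_x)$ and proves the rank formula last.
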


\begin{proof}
Since the formation of the symmetric algebra commutes with base change, the fibre $\PI_x$ is obtained by localizing $X$ at $x$ and taking $\P(\IZ\otimes \k_x)$, where $\k_x$ is the residue field of $\O_X$ at $x$.
\begin{enumerate}
\item[\ref{lemmaFibreNLCI1}] If $x\notin Z$, locally at $x$ the ideal $\IZ$ is just $\O_X$, so $p$ is an isomorphism of $\PI_x$ to $x$.
\item[\ref{lemmaFibreNLCI2},\ref{lemmaFibreNLCI3}] If $x\in Z$, since $Z$ has codimension $n$ in $X$, a subspace of $n$ independent local sections of $\L$ from $\tnV$ is needed at least to generate $\IZ$ locally around $x$. Actually such subspace exists if and only if $Z$ is a local complete intersection (LCI) at $x$. In other words, $\IZ\otimes \k_x$ is a $\k_x$-vector space which can be generated by an $n$-dimensional subspace of $\tnV$ if and only if $Z$ is LCI at $x$, so that $\IZ\otimes \k_x$ is isomorphic to $\k_x^n$ or to $\k_x^{n+1}$ depending on whether $Z$ is LCI at $x$ or not. Therefore $\P(\IZ\otimes \k_x)$ is isomorphic to $\P_x^{n-1}$ or $\P_x^{n}$ depending on whether $Z$ is LCI at $x$ or not.
\end{enumerate}
For the last statement, tensor \eqref{presentI} by $\k_x$ and observe that the kernel $\K_x$ of the surjection $\Phi_x:\tnV\rightarrow \IZ\otimes \k_x$ is a quotient of $\E\otimes \k_x$, which in turn is a quotient of $\mcP_2\otimes \k_x$. The composition of these surjections and of the inclusion $\K_x\rightarrow \tnV$ is just the matrix $M_x$, so $\ker(\Phi_x)=\textnormal{Im}(M_x)$. Therefore $\dim(\IZ\otimes \k_x)=n+1-\textnormal{rank}(M_x)$, which completes the proof. 
\end{proof}

\begin{rem}\label{remCompoSym} In our setting of a zero-dimensional scheme $Z$, by \cite[Proposition 20.6]{eisenbud1995algebra}, the set of points $z\in Z$ such that $\PI_z\simeq \P_z^n$ is equal set theoretically to $\V\Big(\Fitt_{n}(\IZ)\Big)$ where $\Fitt_{n}(\IZ)$ is the ideal generated by the entries of $M$.
\end{rem}
Now, we take the Koszul complex with respect to the map $\tnV\otimes \O_X(-\eta)=\mcP_1\xrightarrow{\Phi} \O_X$ and we write $k_i(\Phi)$ for the $i$-th differential of the Koszul complex. We have the following sequence:
\begin{equation}\stepcounter{SExactes}\tag{K\theSExactes}\label{KoszulPres}
\begin{tikzcd}[row sep=0.1cm,column sep=0.75cm,minimum width=2em]
\wedge^2\mcP_1 \arrow{dr}\arrow[rr,"{k_1(\Phi)}"]& &\mcP_1 \arrow[r,"{\Phi}"]& \IZ \ar{r}& 0\\
 & \F_1 \arrow{ur}\arrow{dr} & & & \\
 0 \arrow{ur}&    & 0 & & \\
\end{tikzcd}
\end{equation}
which is not exact since $Z$ is not empty and where we put $\F_1=\textnormal{Im}\Big(k_1(\Phi)\Big)$. By definition of the presentation and the Koszul complex, we have $\F_1\subset \E$ and $\E/\F_1 = \H_1(\IZ)$ where $\E$ is as in \eqref{presentI} and $\H_1(\IZ)$ stands for the first Koszul homology of the set $(\phi_0\;\ldots\; \phi_n)$ of generators of $\IZ$.

\subsection{Gorenstein nature of the Koszul hull}

We introduce now another subscheme of $\P$ which we call the Koszul hull of $\PI$. This subscheme contains $\PI$ and actually differs from $\PI$ by a copy of $\P^n_Z$, as we will see.

\begin{defi}
Set notation as in \eqref{KoszulPres} and let $\I_{\mfK}$ be the ideal sheaf generated by the entries in the row matrix $\textbf{y}\p k_1(\Phi)$. We call the \emph{Koszul hull}, denoted by $\mfK$, the subscheme in $\P$ defined by $\mfK=\V(\I_{\mfK})$.

\end{defi}

Now, we explain the strategy of the proof of \Cref{thmSubLin}. Via the inclusion $\F\subset\E$, we see that $\I_\mfK\subset \J$, that is $\PI\subset \mfK$. Hence we have the following short exact sequence:
\begin{center}
\begin{tikzpicture}
  \matrix (m) [row sep=0.1cm,column sep=0.75cm,minimum width=2em]
  {
     \node(a11){$0$}; &\node(a12){$\I_\mfK$}; &\node(a13){$\J$}; &\node(a14){$\J/\I_\mfK$}; & \node(a15){$0$.}; \\ };
  \path[-stealth]

    (a11) edge (a12)
    (a12) edge (a13)
    (a13) edge (a14)
    (a14) edge (a15);
\end{tikzpicture}
\end{center}

So in order to get the subregularity of the resolution of $\J$, we first show the subregularity of resolutions of $\I_\mfK$ and of $\J/\I_\mfK$ and from there, we show how we get the resolution of $\J$ by patching together these resolutions.

We start by analysing the Koszul hull more closely.

\begin{prop}\label{resKoszHull}\label{PropKozulCanonic}
We have the following properties.
\begin{enumerate}[label=\rm{\it(\roman*)}]
\item\label{PropKoszulCano1} The scheme $\mfK$ is determinantal. More precisely, $\I_\mfK$ is the ideal of the $2\times 2$ minors of the map $\tnV\otimes\O_{\P}\rightarrow \O_\P(\eta)\oplus\O_{\P}(\xi)$ defined by the matrix:\[\psi=\begin{pmatrix}
\phi_0 & \ldots & \phi_n \\
y_0 & \ldots & y_n
\end{pmatrix}.\]
\end{enumerate}
Under the assumption that $\dim_X(Z)=0$:
\begin{enumerate}[label=\rm{\it(\roman*)},resume]
\item\label{PropKoszulCano2} $\codim(\mfK,\P)=n$.
\item\label{PropKoszulCano3} A locally free resolution of $\J$ is the sheafification of the Eagon-Northcott complex. Namely, there is a long exact sequence:
\begin{equation}\tag{\ref*{resK}}\label{MinResK}
\begin{tikzcd}[row sep=3em,column sep=0.5cm,minimum width=2em]
0 \ar{r}& \mfQ_n \ar{r}& \ldots \ar{r}& \mfQ_2 \ar{r}& \mfQ_1 \ar{r}& \I_{\mfK} \ar{r}& 0
\end{tikzcd}
\end{equation}

where $\mfQ_i=\overset{i}{\underset{j=0}{\oplus}}\mfQ_{i,j}$ and \[\mfQ_{i,j}=(\overset{i+1}{\wedge}\tnV)\otimes \O_{\P}(-(j+1)\xi-(i-j)\eta) \hspace{0.5cm}\text{for }1\leq i\leq n\text{ and } 0\leq j\leq i-1. \]

\item\label{PropKoszulCano4} The scheme $\mfK$ is Gorenstein, more precisely we have:
\[ \omega_{\mfK}\simeq \p\omega_X\otimes\O_{\P}(n\eta-n\xi).
\]
\end{enumerate}

\end{prop}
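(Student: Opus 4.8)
The plan is to prove the four statements of \Cref{resKoszHull} more or less in the order given, since each feeds the next. The key observation, stated in part \ref{PropKoszulCano1}, is that the Koszul hull is the locus where the two rows of the $2\times n+1$ matrix $\psi$ become proportional, i.e. $\mfK$ is a \emph{generic determinantal scheme} cut out by the $2\times 2$ minors of $\psi$. To establish \ref{PropKoszulCano1} I would unwind the definition: the entries of $\textbf{y}\,\p k_1(\Phi)$, where $k_1(\Phi)$ is the first Koszul differential, are exactly the expressions $\phi_i y_j - \phi_j y_i$ for $i<j$, which are precisely the $2\times 2$ minors of $\psi$. This is a direct computation with the Koszul complex and I expect it to be routine; the only care needed is matching signs and indexing conventions between $k_1(\Phi)$ and the minor expansion.

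\emph{The main obstacle} will be the codimension statement \ref{PropKoszulCano2}, because everything else hinges on it. For a determinantal scheme defined by the maximal minors of a $2\times(n+1)$ matrix, the \emph{generic} (and maximal possible) codimension is $(n+1)-2+1 = n$, by the classical bound of Eagon--Northcott. So the content is to show that under $\dim(Z)=0$ the ideal $\I_\mfK$ attains this expected codimension $n$, rather than dropping below it. The natural route is fibrewise over $X$ via the bundle map $p:\P\to X$: away from $Z$ the two rows of $\psi$ cannot be simultaneously proportional except where the $\phi_i$ force $y=0$, so the fibre $\mfK_x$ is empty or small, while over a point $z\in Z$ the top row of $\psi$ vanishes and $\mfK_z$ is cut out only by the bottom-row conditions, giving a fibre of controlled dimension. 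Combining the generic fibre dimension with $\dim X = n$ and $\dim Z = 0$, I would bound $\dim\mfK$ and hence $\codim(\mfK,\P) \le n$; together with the Eagon--Northcott upper bound on codimension this pins it to exactly $n$. I anticipate the delicate point is handling the non-LCI points of $Z$, where \Cref{lemmaFibreNLCI} and \Cref{remCompoSym} tell us the fibre of $\PI$ jumps to all of $\P^n_z$; one must check that this finite set of bad fibres, being supported over the zero-dimensional $Z$, does not lower the codimension.

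Once \ref{PropKoszulCano2} is in hand, \ref{PropKoszulCano3} is essentially formal. The Eagon--Northcott complex $(\mfQ_\bullet)$ associated with $\psi$ is always a complex; by the Buchsbaum--Eisenbud acyclicity criterion (or the general determinantal theory of \cite[2.C]{BrunsVetter1988DetRing}), it is a \emph{resolution} of $\I_\mfK$ precisely when the ideal has the maximal expected codimension $n$, which is exactly what \ref{PropKoszulCano2} supplies. So here I would simply invoke that the codimension hypothesis makes the generically exact Eagon--Northcott complex everywhere exact, and identify its terms $\mfQ_i=\oplus_j \mfQ_{i,j}$ with the shifts written in the statement by reading off the twists coming from $\O_\P(\eta)\oplus\O_\P(\xi)$ and $\wedge^{i+1}\tnV$.

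For \ref{PropKoszulCano4}, the Gorenstein and canonical-module statement, the plan is to compute $\omega_\mfK$ from the resolution. Since $\mfK$ is Cohen--Macaulay (a determinantal scheme of maximal codimension $n$ in the smooth $\P$ of dimension $2n$ — this is a consequence of \ref{PropKoszulCano2} and \ref{PropKoszulCano3}), I can use $\omega_\mfK \simeq \Ext^n_{\O_\P}(\O_\mfK,\omega_\P)$. The Eagon--Northcott resolution \eqref{resK} has length $n$ and its last nonzero term $\mfQ_n$ is a single line bundle (rank $\binom{n+1}{n+1}=1$), which forces $\omega_\mfK$ to be locally free of rank one on $\mfK$ — this is exactly the Gorenstein property. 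To get the explicit formula I would dualize the complex, pick up $\omega_\P = \p\omega_X \otimes \O_\P(-(n+1)\xi + n\eta)$ (the relative canonical of the projective bundle $\P=\P_X^n$ twisted by the base), and combine it with the twist in the top term $\mfQ_n=\O_\P(-(n)\xi - \text{(the remaining }\eta\text{ shift)})$ read from the formula $\mfQ_{n,j}$. Bookkeeping of the twists then yields $\omega_\mfK \simeq \p\omega_X \otimes \O_\P(n\eta - n\xi)$. I expect this last step to be purely a twist-counting calculation, the only risk being an off-by-one in the relative canonical bundle or in the top Eagon--Northcott term, which a direct check on $\mfQ_{n,0}$ will settle.
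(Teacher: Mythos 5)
Your parts \emph{(i)}--\emph{(iii)} essentially coincide with the paper's own proof: \emph{(i)} is the same direct identification of the entries of $\textbf{y}\,\p k_1(\Phi)$ with the $2\times 2$ minors of $\psi$; \emph{(ii)} is the same fibrewise dimension count (note two small slips: for $x\notin Z$ the fibre $\mfK_x$ is always exactly one point, never empty, and for $z\in Z$ there are no ``bottom-row conditions'' left --- all minors vanish identically, so $\mfK_z=\P^n_z$; also bounding $\dim\mfK$ from above gives $\codim(\mfK,\P)\geq n$, not $\leq n$, which then combines with the Eagon--Northcott bound as you intend); and \emph{(iii)} is the same appeal to the determinantal theory of Bruns--Vetter once the codimension is the expected one.

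The genuine gap is in \emph{(iv)}. You assert that the last term $\mfQ_n$ of the Eagon--Northcott resolution is a single line bundle of rank $\binom{n+1}{n+1}=1$ and deduce Gorensteinness formally. This is false: $\mfQ_n=\oplus_{j=0}^{n-1}\mfQ_{n,j}$ is a direct sum of $n$ line bundles (each $\mfQ_{n,j}=(\overset{n+1}{\wedge}\tnV)\otimes\O_{\P}(-(j+1)\xi-(n-j)\eta)$ has rank one, but there are $n$ values of $j$), so $\omega_\mfK=\coker\bigl(\mfQ_{n-1}^\vee\otimes\omega_\P\rightarrow\mfQ_n^\vee\otimes\omega_\P\bigr)$ is a priori only a quotient of a rank-$n$ bundle. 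Moreover, no formal argument from the shape of the resolution can work: the generic determinantal ring of $2\times 2$ minors of a $2\times(n+1)$ matrix (the cone over the Segre embedding of $\P^1\times\P^n$) has exactly the same Eagon--Northcott resolution and is \emph{not} Gorenstein for $n\geq 2$. What makes $\mfK$ Gorenstein is the special form of $\psi$, namely that its second row consists of the relative coordinates $y_i$, which have no common zero: the paper writes out the transpose $M_1$ of the last differential explicitly and checks pointwise (over $z\in Z$, where all $\phi_s(z)=0$, at least one $y_i\neq 0$) that $M_1$ has corank exactly $1$, so the cokernel is locally free of rank one. For the same reason, ``bookkeeping of twists'' cannot produce the formula $\omega_{\mfK}\simeq \p\omega_X\otimes\O_{\P}(n\eta-n\xi)$, since the $n$ summands of $\mfQ_n^\vee\otimes\omega_\P$ carry different twists and the cokernel is not one of them; the paper instead realizes $\mfK$ as a complete intersection of $n+1$ divisors in the $\P^1$-bundle $\bbB=\P\bigl(\O_{\P}(\eta)\oplus\O_{\P}(\xi)\bigr)$, applies adjunction there, and proves the key restriction isomorphism $\O_{\mfK}(\zeta)\simeq\O_{\mfK}(\eta)$, which is the geometric input that collapses the competing twists. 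Without an argument of this kind --- the pointwise corank computation for Gorensteinness, plus the complete-intersection/adjunction step (or an equivalent) for the explicit formula --- both claims of \emph{(iv)} remain unproved.
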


\begin{proof}
\begin{itemize}
\item[\ref{PropKoszulCano1}] The morphism $k_1(\Phi)$ takes the form, \[k_1(\Phi)=\begin{pmatrix}
\phi_1 & \phi_2 & \ldots  \\
-\phi_0 & 0 & \ldots \\
0 & -\phi_0 & \ldots \\
\vdots & 0 & \ldots \\
\vdots & \vdots & \ldots 
\end{pmatrix}\] and $\IK$ is generated by the entries in the row matrix $\textbf{y}\p k_1(\Phi)$. Those entries are the same as the $2\times 2$ minors of the matrix $\psi$.
\item[\ref{PropKoszulCano2}] We argue set-theoretically by looking at the fibres of the map $\mfK \to X$ obtained as restriction of $p$ to $\mfK$. First, note that if $z \not \in Z$, then it is clear by the definition of $\mfK$ that $\mfK_z$ is a single point. On the other hand, if $z \in Z$ then $\phi_i(z)=0$ for all $i \in\lbrace 0,\ldots,n\rbrace$ so by definition of $\mfK$ we have $\mfK_z=\P^n_z$. Therefore the reduced structure of $\mfK$ is the union of $X$ and of $\cup_{ z\in Z} \P^n_z$. This proves that $\mfK$ has dimension $n$.
\item[\ref{PropKoszulCano3}] Since $\mfK$ is determinantal of the expected codimension, it is Cohen Macaulay \cite[Cor. 2.8]{BrunsVetter1988DetRing}. Hence $\depth(\IK)=\codim(\mfK,\P)=n$. Therefore the Eagon-Northcott complex provides a global resolution of the ideal $\IK$ \cite[Th. 2.16]{BrunsVetter1988DetRing}. The first map  $\wedge^2\tnV\otimes \O_{\P}\rightarrow\wedge^2\O_\P(\eta)\oplus\O_{\P}(\xi)$ of the Eagon-Northcott complex is the matrix $\wedge^2\psi$. Hence the complex \eqref{MinResK} provides a resolution of $\I_{\mfK}$.

\item[\ref{PropKoszulCano4}] By the previous item \ref{PropKoszulCano3}, a resolution of $\omega_\mfK$ is given by:
\begin{small}
\begin{center}
\begin{tikzpicture}
  \matrix (m) [row sep=0.5cm,column sep=0.6cm,minimum width=2em]
  {
     \node(y){$0$}; &\node(z){$\mfQ_1^\vee\otimes\omega_{\P}$}; &\node(a){$\ldots$}; &\node(c){$\mfQ_{n-1}^{\vee}\otimes\omega_{\P}$}; &\node(d){$\mfQ_n^{\vee}\otimes\omega_{\P}$}; &\node(e){$\omega_\mfK$}; & \node(f){$0$.}; \\};
  \path[-stealth]
  	(y) edge (z)
  	(z) edge (a)
    (a) edge (c)
    (c) edge node[above]{$M_1$} (d)
    (d) edge (e)
    (e) edge (f);
\end{tikzpicture}
\end{center}
\end{small}
Locally, we can write explicitly the matrix $M_1$ which is the transpose of the last matrix in the Eagon-Northcott complex. So $M_1$ has size $n\times (n-1)(n+1)$ and locally takes the form:
\begin{center}
\begin{tikzpicture}
    \matrix (m) [row sep=0cm,%
			 column sep=0cm,%
			 minimum width=2em,%
             left delimiter  = (,%
             right delimiter = )]
{
  \node(a11){$\phi_0$}; &\node(a12){}; &\node(a13){$\phi_n$}; &\node(a14){$0$};  &\node(a15){}; &\node(a16){}; &\node(a17){}; &\node(a18){}; &\node(a19){}; &\node(a110){}; &\node(a111){}; &\node(a112){}; &\node(a113){$0$};\\
  \node(a21){$y_0$}; &\node(a22){}; &\node(a23){$y_n$}; &\node(a24){$\phi_0$};&\node(a25){}; &\node(a26){$\phi_n$}; &\node(a27){$0$};&\node(a28){}; &\node(a29){}; &\node(a210){};&\node(a211){}; &\node(a212){}; &\node(a213){$0$};  \\
  \node(a31){$0$}; &\node(a32){}; &\node(a33){$0$}; &\node(a34){$y_0$}; &\node(a35){}; &\node(a36){$y_n$}; &\node(a37){$\phi_0$};&\node(a38){}; &\node(a39){$\phi_n$}; &\node(a310){$0$};&\node(a311){}; &\node(a312){}; &\node(a313){$0$}; \\
  \node(a41){}; &\node(a42){}; &\node(a43){}; &\node(a44){}; &\node(a45){}; &\node(a46){}; &\node(a47){}; &\node(a48){}; &\node(a49){}; &\node(a410){}; &\node(a411){}; &\node(a412){}; &\node(a413){}; \\
  \node(a51){$0$}; &\node(a52){}; &\node(a53){}; &\node(a54){}; &\node(a55){}; &\node(a56){}; &\node(a57){}; &\node(a58){}; &\node(a59){}; &\node(a510){$0$}; &\node(a511){$y_0$}; &\node(a512){}; &\node(a513){$y_n$}; \\
};

\draw[loosely dotted] (a11)-- (a13);
\draw[loosely dotted] (a14)-- (a113);
\draw[loosely dotted] (a21)-- (a23);
\draw[loosely dotted] (a24)-- (a26);
\draw[loosely dotted] (a27)-- (a213);
\draw[loosely dotted] (a31)-- (a33);
\draw[loosely dotted] (a34)-- (a36);
\draw[loosely dotted] (a37)-- (a39);
\draw[loosely dotted] (a310)-- (a313);
\draw[loosely dotted] (a41)-- (a413);
\draw[loosely dotted] (a51)-- (a510);
\draw[loosely dotted] (a511)-- (a513);
\node [left=0.5cm] at (a31.west) {$M_1=$};
\node [right=0.3cm] at (a313.east) {.};
\end{tikzpicture}
\end{center}

Consider an open cover of $X$ by a family of open subsets $\lbrace U_t \mid t\in J\rbrace $ such that $U_t\cap Z=\lbrace z_t\rbrace $. If $z\in U\subset U_t\backslash \lbrace z_t \rbrace$ for all $t$, then the restriction of $\IZ$ to $U$ is equal to $\O_U$ so that $\mfK_U=\PI_U=U$ is obviously Gorenstein, because $U$ is smooth.

Or else, if $z=z_t$ for some $t$, then $\phi_s(z)=0$ for all $s\in\lbrace 0,\ldots, n\rbrace$. In this case, since every point in $(y_0:\ldots:y_n)\in\P_z^n$ has at least one non zero coordinate, the matrix $(M_1)_z$ has corank $1$. This shows that for any point of $\PI$, the stalk of $\omega_\mfK$ has rank $1$ at that point, so $\omega_{\mfK}$ is locally free of rank one. Hence $\mfK_{U_j}$ is Gorenstein. This proves that $\mfK$ is Gorenstein.

Now, we show the isomorphism \[\omega_{\mfK}\simeq\p\omega_X\otimes \O_{\P}(n\eta-n\xi).\]

To do this, we first give an explicit formula for $\omega_{\mfK}$ by describing the scheme $\mfK$ as a complete intersection into a larger projective bundle (see \cite{Ein1993Cohomology} for more details about this construction). Let $\bbB$ be the projective bundle $\P\Big(\O_{\P}(\eta)\oplus\O_{\P}(\xi)\Big)$ and put $\zeta$ for the relative hyperplane class of the bundle map $q:\bbB\rightarrow \P$. A divisor $D$ in $|\O_{\bbB}(\zeta)|$ corresponds to a map ${\psi_D}:\O_{\P}\rightarrow\O_{\P}(\eta)\oplus \O_{\P}(\xi)$. Since the matrix $\psi$ whose $2\times 2$ minors define $\mfK$ has constant rank $1$ over $\mfK$, the map $q$ restricts to an isomorphism from the complete intersection $\cap_{i=0}^n D_i$ to $\mfK$, where $D_i$ corresponds to $\psi_{D_i}=(\phi_i,y_i)$.

Therefore, by adjunction we have: 
\begin{equation}\label{EqCano}
q^*\omega_{\mfK}\simeq \omega_{\bbB}\Big( (n+1)\zeta \Big).
\end{equation}

Next, we show that:
\begin{equation}\label{IsoRestr}
\O_{\mfK}(\zeta)\simeq \O_{\mfK}(\eta).
\end{equation}

Indeed, given a divisor $D\in|\O_{\bbB}(\zeta)|$, the intersection $D\cap \mfK$ is defined in $\P$ by the vanishing of the $2\times 2$ minors of the matrix:
\[ \begin{pmatrix}
\phi_0 & \ldots & \phi_n & \phi_D \\ y_0 & \ldots & y_n & y_D
\end{pmatrix}, \]
where $\psi_D=(\phi_D,y_D)$ corresponds to $D$. Since $y_D$ lies in $\langle y_0,\ldots,y_n\rangle$, this matrix is equivalent up to row and column operations to:
\[ \begin{pmatrix}
\phi_0 & \ldots & \phi_n & \phi_D' \\ y_0 & \ldots & y_n & 0
\end{pmatrix}, \]
for some $\phi_D'\in \tnH^0(X,\L)$.

This means that the ideal of $D\cap \mfK$ in $\mfK$ is generated by $(y_0\phi_D',\ldots, y_n\phi_D')$. Since all the $y_i$ do not vanish simultaneously, this implies that $\O_{\mfK}(\xi)$ is generated by the restriction to $\mfK$ of $\phi_D'$. Hence $\O_{\mfK}(\zeta)\simeq \O_{\mfK}(\eta)$ and we compute:
\[\omega_\P\simeq\p\omega_X\otimes \O_{\P}\Big(-(n+1)\xi\Big)\]
and therefore:
\[ \omega_{\bbB}\simeq q^*\omega_{\P}\otimes\O_{\bbB}(-2\zeta+\eta+\xi).\]

Hence by \eqref{EqCano} and \eqref{IsoRestr}, we get that $\omega_{\mfK}\simeq \p\omega_X\otimes\O_{\P}(n\eta-n\xi)$.
\end{itemize}
\end{proof}

\subsection{Description of the quotient $\I_{\PI}/\I_\mfK$} We show now the subregularity of a locally free resolution of the quotient $\I_{\PI}/\I_\mfK$.

\begin{prop}\label{resQuotient} We have the following isomorphism:\[\J/\I_\mfK\simeq \p(\omega_Z\otimes\omega_X^\vee)\otimes\O_{\P}(-n\eta-\xi).\]
\end{prop}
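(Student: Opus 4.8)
The plan is to identify the quotient $\J/\IK$ with a twist of the pulled-back first Koszul homology $\p\H_1(\IZ)$, to compute $\H_1(\IZ)$ by means of the self-duality of the Koszul complex, and to check that the comparison map so obtained is an isomorphism.

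First I would make the comparison map explicit. Let $\mu\colon \tnV\otimes\O_\P(-\eta)\to\O_\P(\xi)$ be the tautological map $e_i\mapsto y_i$. By the descriptions of the two ideals, $\J$ is the image of $\p\E$ and $\IK$ is the image of $\p\F_1$ under $\mu$, where $\E=\ker(\Phi)$ and $\F_1=\operatorname{Im}(k_1(\Phi))$ appear in \eqref{KoszulPres}, with $\E/\F_1=\H_1(\IZ)$. Since $p$ is flat the sequence $0\to\p\F_1\to\p\E\to\p\H_1(\IZ)\to 0$ stays exact, and applying $\mu$ and passing to cokernels gives a canonical surjection $\p\H_1(\IZ)\otimes\mathcal N\twoheadrightarrow\J/\IK$, where $\mathcal N$ is the twist by which $\mu$ shifts the bigrading; tracking the $\xi$- and $\eta$-degrees of the generators $\sum_i c_i y_i$ pins down $\mathcal N=\O_\P(\eta-\xi)$.

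Next I would compute $\H_1(\IZ)$. Since $Z$ is zero-dimensional in the smooth $n$-fold $X$, $\O_Z$ is Cohen-Macaulay of codimension $n$, so $\IZ$ has grade $n$ and the Koszul complex on the $n+1$ sections $\Phi$ has homology only in degrees $0$ and $1$, namely $\O_Z$ and $\H_1(\IZ)$. Viewing this Koszul complex as a bounded complex $C^\bullet$ of locally free sheaves with $\mathcal H^0(C^\bullet)=\O_Z$ and $\mathcal H^{-1}(C^\bullet)=\H_1(\IZ)$, its self-duality gives $\mR\Hom(C^\bullet,\O_X)\simeq C^\bullet\otimes\O_X((n+1)\eta)\,[-(n+1)]$, hence $\mathcal H^{n}\big(\mR\Hom(C^\bullet,\O_X)\big)\simeq\H_1(\IZ)\otimes\O_X((n+1)\eta)$. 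On the other hand the hyper-$\Ext$ spectral sequence $E_2^{p,q}=\Ext^p\big(\mathcal H^{-q}(C^\bullet),\O_X\big)\Rightarrow\mathcal H^{p+q}\big(\mR\Hom(C^\bullet,\O_X)\big)$ degenerates in total degree $n$: the row $q=0$ contributes only $\Ext^n(\O_Z,\O_X)$ (as $\O_Z$ is Cohen-Macaulay of codimension $n$), while the row $q=1$ vanishes for $p<n$ because $\H_1(\IZ)$ is supported on $Z$, of codimension $n$. Therefore $\H_1(\IZ)\otimes\O_X((n+1)\eta)\simeq\Ext^n(\O_Z,\O_X)\simeq\omega_Z\otimes\omega_X^\vee$, i.e. $\H_1(\IZ)\simeq\omega_Z\otimes\omega_X^\vee\otimes\O_X(-(n+1)\eta)$. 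Combined with the surjection of the previous paragraph and the twist $\mathcal N=\O_\P(\eta-\xi)$, this already yields a surjection $\p(\omega_Z\otimes\omega_X^\vee)\otimes\O_\P(-n\eta-\xi)\twoheadrightarrow\J/\IK$.

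It remains to prove injectivity, which I expect to be the main obstacle. Both the source $\p\H_1(\IZ)\otimes\mathcal N$ and the target $\J/\IK$ are supported on $p^{-1}(Z)=\bigsqcup_{z\in Z}\P^n_z$ and are Cohen-Macaulay of pure dimension $n$: the source because it is the pullback of a finite-length sheaf, the target because it is the ideal of $\PI$ inside the Gorenstein scheme $\mfK$ of \Cref{PropKozulCanonic}. For such a surjection it suffices to check that it is an isomorphism at the generic point of each fibre $\P^n_z$, for then its kernel, being a subsheaf of a Cohen-Macaulay sheaf but supported in dimension $<n$, must vanish. Localizing at $z$ and restricting to the generic point of $\P^n_z$ reduces the claim to a finite-dimensional statement governed by $r=\operatorname{rank}(M_z)$: by \Cref{lemmaFibreNLCI} the fibre $\PI_z$ is $\P^{n-r}_z$ while $\mfK_z=\P^n_z$, and one must match the length of $\H_1(\IZ)_z$ against the generic rank of the ideal of $\P^{n-r}_z$ in $\P^n_z$, distinguishing the local complete intersection locus ($r=n$) from its complement. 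Carrying out this local comparison, and so establishing injectivity, is the technical heart of the argument; granting it, the surjection above is an isomorphism and the proposition follows.
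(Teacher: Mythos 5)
Your first two steps are correct, and they follow a genuinely different route from the paper's proof (which never constructs a comparison map: it identifies $(\I_\mfK:\J)\simeq\p\IZ$ in \Cref{quotientIdeals} and then applies Gorenstein linkage in $\O_\mfK$). The canonical surjection $\p\H_1(\IZ)\otimes\O_{\P}(\eta-\xi)\twoheadrightarrow\J/\I_\mfK$ does exist, since both ideals are images of $\p\E$ and $\p\F_1$ under the tautological map and $p$ is flat; and your duality computation $\H_1(\IZ)\simeq\omega_Z\otimes\omega_X^\vee\bigl(-(n+1)\eta\bigr)$ is exactly the content of \Cref{Homology}, proved there by dualizing the Koszul complex (your spectral-sequence packaging is equivalent). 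The reduction of injectivity to the generic points $\zeta_z$ of the fibres $\P^n_z$ is also legitimate: by flat base change for associated primes, the associated points of $\p\H_1(\IZ)$ are precisely these generic points, so a kernel vanishing at them vanishes identically. (Your parallel claim that $\J/\I_\mfK$ is Cohen--Macaulay ``because it is the ideal of $\PI$ inside the Gorenstein scheme $\mfK$'' is unjustified --- in the paper this is an output of linkage, not an input --- but your argument does not actually use it.)

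The genuine gap is the step you explicitly defer: the length comparison at each $\zeta_z$ is not a verification to be granted, it is the entire content of the proposition, and your description of it is moreover off target. What must be shown is $\mathrm{length}_{\O_{\P,\zeta_z}}\bigl((\J/\I_\mfK)_{\zeta_z}\bigr)=\mathrm{length}_{\O_{X,z}}\H_1(\IZ)_z$, and this quantity is carried by non-reduced structure along the fibre, not by the fibre dimensions $r=\rk(M_z)$ of \Cref{lemmaFibreNLCI} nor by ``the ideal of $\P^{n-r}_z$ in $\P^n_z$''. Concretely, take $n=1$, $\O_{X,z}=\k[x]_{(x)}$, $(\phi_0,\phi_1)=(x^2,x^3)$: here $Z$ is a local complete intersection and $\PI_z$ is a single point, so $\PI$ does not even contain $\zeta_z$ and the ideal of $\PI_z$ in $\P^1_z$ is the unit ideal at $\zeta_z$; nevertheless $\J_{\zeta_z}=\O_{\P,\zeta_z}$ while $\I_{\mfK,\zeta_z}=x^2\O_{\P,\zeta_z}$, so $(\J/\I_\mfK)_{\zeta_z}\simeq\O_{\P,\zeta_z}/x^2\O_{\P,\zeta_z}$ has length $2=\mathrm{length}\,\H_1(\IZ)_z$, an equality invisible to the reduced fibres your sketch invokes. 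Establishing it in general --- in particular at non-LCI points --- requires identifying the annihilator of $\J/\I_\mfK$ in the Artinian Gorenstein local ring $\O_{\mfK,\zeta_z}$ and applying Gorenstein duality there; that is precisely the machinery of the paper's proof (\Cref{quotientIdeals} together with \cite[Theorem 21.23]{eisenbud1995algebra} and the computation of $\omega_\mfK$ in \Cref{resKoszHull}~\ref{PropKoszulCano4}), so the ``technical heart'' you postpone cannot be filled by a routine local check, and as it stands the proof is incomplete.
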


The proof of this proposition is the object of \Cref{quotientIdeals}. Its proof and the proof of \Cref{resQuotient} rely mostly on \cite[Theorem 21.23]{eisenbud1995algebra}. We refer to \cite{eisenbud1995algebra} for the relevant definitions.

\begin{lemma}\label{quotientIdeals}
The quotient ideal sheaf $(\I_{\mfK}:\I_{\PI})$ is isomorphic to $\p\IZ$.
\end{lemma}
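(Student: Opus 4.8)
The plan is to prove the sharper statement that $(\I_\mfK:\J)=\p\IZ$ as ideal sheaves of $\P$, and to obtain it by first computing the \emph{opposite} colon $(\I_\mfK:\p\IZ)=\J$ and then flipping it by Gorenstein linkage. As a warm‑up showing the mechanism, note that the inclusion $\p\IZ\subseteq(\I_\mfK:\J)$ is immediate: a generator of $\J$ is an entry $\sum_j y_j M_{jc}$ of $\textbf{y}\p M$ whose column $(M_{jc})_j$ is a syzygy, so $\sum_j\phi_jM_{jc}=0$; since $\phi_iy_j-\phi_jy_i$ is one of the $2\times 2$ minors of $\psi$ we have $\phi_iy_j\equiv\phi_jy_i\pmod{\I_\mfK}$, whence $\phi_i\sum_j y_jM_{jc}\equiv y_i\sum_j\phi_jM_{jc}=0\pmod{\I_\mfK}$ for every $i$. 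The real content is the reverse inclusion, and linkage is what upgrades the containment to an equality: once $(\I_\mfK:\p\IZ)=\J$ is established, applying $(\I_\mfK:-)$ to both sides and using the double‑colon identity $(\I_\mfK:(\I_\mfK:\p\IZ))=\p\IZ$ yields $(\I_\mfK:\J)=\p\IZ$.

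I would establish $(\I_\mfK:\p\IZ)=\J$ by a computation on the standard charts $U_i=\{y_i\neq 0\}$, with affine fibre coordinates $s_k=y_k/y_i$ (and $s_i=1$). On $U_i$ the minors of $\psi$ show that $\I_\mfK|_{U_i}$ is the complete intersection $(\phi_k-s_k\phi_i)_{k\neq i}$, so that $\phi_k\equiv s_k\phi_i\pmod{\I_\mfK}$ and hence $\p\IZ|_{U_i}=\I_\mfK|_{U_i}+(\phi_i)$; consequently $(\I_\mfK:\p\IZ)|_{U_i}=(\I_\mfK:\phi_i)|_{U_i}$. It remains to identify the annihilator of $\phi_i$ in $\O_\mfK|_{U_i}$. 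The key observation is that the condition $h\phi_i\in\I_\mfK|_{U_i}$, i.e. $h\phi_i=\sum_{k\neq i}c_k(\phi_k-s_k\phi_i)$, rearranges exactly into a syzygy $\sum_j b_j\phi_j=0$ of $(\phi_0,\dots,\phi_n)$ with $b_i=h+\sum_{k\neq i}c_ks_k$ and $b_k=-c_k$, and that then $h=\sum_j s_jb_j$. Since $\O_X\to\O_X[s]$ is flat, every syzygy over $\O_X[s]$ is an $\O_X[s]$‑combination of the columns of $M$, so $h=\sum_c r_c\big(\sum_j s_jM_{jc}\big)$ lies, up to the unit $y_i$, in the ideal generated by the entries of $\textbf{y}\p M$, that is $h\in\J$. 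Running the equivalence in both directions gives $(\I_\mfK:\phi_i)|_{U_i}=\J|_{U_i}$ on every chart, hence $(\I_\mfK:\p\IZ)=\J$ on all of $\P$.

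To conclude I would invoke \cite[Theorem 21.23]{eisenbud1995algebra}. By \Cref{PropKozulCanonic} the scheme $\mfK$ is Gorenstein of codimension $n$; and since $\dim(Z)=0$ the scheme $Z$ is Cohen--Macaulay, so $\P^n_Z=\V(\p\IZ)$ is a projective bundle over a Cohen--Macaulay scheme and is therefore Cohen--Macaulay of codimension $n$ in $\P$, with $\I_\mfK\subseteq\p\IZ$. The linkage theorem then applies to the pair $\I_\mfK\subseteq\p\IZ$ and furnishes the double‑colon identity $(\I_\mfK:(\I_\mfK:\p\IZ))=\p\IZ$; combined with $(\I_\mfK:\p\IZ)=\J$ from the previous paragraph this gives $(\I_\mfK:\J)=\p\IZ$, as desired.

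I expect the main obstacle to be the middle step: recognising that annihilating $\phi_i$ modulo the complete‑intersection ideal $\I_\mfK|_{U_i}$ is precisely the datum of a syzygy of $(\phi_0,\dots,\phi_n)$ re‑weighted by the fibre coordinates $s_j$, and that this re‑weighting reproduces exactly the generators $\textbf{y}\p M$ of $\J$. Verifying that the syzygy module does not grow under the polynomial extension $\O_X\to\O_X[s]$ is what guarantees that no spurious elements enter the colon, and some care is needed to keep the two colon directions compatible so that the final linkage flip is legitimate.
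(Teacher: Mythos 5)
Your proof is correct, and its skeleton coincides with the paper's: both arguments first establish the opposite colon $(\I_\mfK:\p\IZ)=\J$ and then flip it using the Gorenstein linkage theorem \cite[Theorem 21.23]{eisenbud1995algebra}, applied in $\O_\mfK$ to the codimension-zero, unmixed ideal of $\bbW=\P^n_Z$; your verification of the hypotheses ($\mfK$ Gorenstein by \Cref{PropKozulCanonic}, $\bbW$ Cohen--Macaulay of codimension $n$ in $\P$, and $\I_\mfK\subseteq\p\IZ$ so that colons in $\O_\P$ match annihilators in $\O_\mfK$) is exactly what the paper uses. The genuine difference lies in how the intermediate identity $(\I_\mfK:\p\IZ)=\J$ is obtained. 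The paper proves it globally and coordinate-free, by a snake-lemma chase on the commutative diagram built from the $\textbf{y}$-Koszul differential $k_1(\textbf{y})$ and the pulled-back presentation $\p M$: the induced map $\beta=\textbf{y}\p M$ has image a twist of $\J$ inside $\O_\P(\xi-\eta)$, so $\IW/\I_\mfK$ is a twist of $\O_\P/\J$ and $\textnormal{Ann}(\IW/\I_\mfK)=\J$. You instead compute on the charts $U_i=\lbrace y_i\neq 0\rbrace$, where $\I_\mfK$ becomes the complete intersection $(\phi_k-s_k\phi_i)_{k\neq i}$, reduce the colon to $(\I_\mfK:\phi_i)$, and translate the condition $h\phi_i\in\I_\mfK$ into a syzygy of $(\phi_0,\ldots,\phi_n)$ over $\O_X[s]$; flatness of $\O_X\rightarrow\O_X[s]$ guarantees every such syzygy is an $\O_X[s]$-combination of the columns of $M$, whence $h\in\J$, and the converse direction is the same computation run backwards. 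Both routes are sound: the paper's chase additionally identifies the module $\IW/\I_\mfK$ as a twisted structure sheaf of $\PI$, which fits the surrounding analysis of $\J/\I_\mfK$, while your argument is more elementary and self-contained, makes the local complete-intersection structure of $\mfK$ explicit, and isolates the one commutative-algebra fact doing the real work (syzygies do not grow under flat base change). Your warm-up inclusion $\p\IZ\subseteq(\I_\mfK:\J)$ does not appear in the paper but is a correct and useful sanity check on the direction of the linkage flip.
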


\begin{proof}

As in the proof of \Cref{resKoszHull}, we denote by $k_1(\textbf{y})$ the first differential in the Koszul complex associated to the map $(y_0\;\ldots\;y_n)$. We denote also by $\IPIK$ the ideal of $\PI$ in $\mfK$ and $\bbW$ stands for the scheme $\p Z$. Of course we have $\bbW\simeq\P^n_Z$. The inclusion $\I_{\mfK}\subset \IW$ explains the right horizontal exact sequence in the following commutative diagram:
\begin{center}
\begin{tikzpicture}
  \matrix (m) [row sep=2em,column sep=3em,minimum width=2em]
  {
     \node(18){}; &\node(19){}; &\node(11){}; &\node(12){$0$}; &\node(13){}; \\
     \node(28){}; &\node(29){}; &\node(21){$\wedge^2\tnV\otimes\O_{\P}(-\xi-\eta)$}; &\node(22){$\I_\mfK$}; &\node(23){$0$}; \\
     \node(38){$0$}; &\node(39){$\p\E$}; &\node(31){$\tnV\otimes\O_{\P}(-\eta)$}; &\node(32){$\I_{\bbW}$}; &\node(33){$0$}; \\
     \node(48){$0$}; &\node(49){$\mathcal{C}$}; &\node(41){$\O_{\P}(\xi-\eta)$}; &\node(42){$\I_{\bbW}/\I_{\mfK}$}; &\node(43){$0$};\\
     \node(58){}; &\node(59){$0$}; &\node(51){$0$}; &\node(52){$0$}; &\node(53){};  \\};
  \path[-stealth]
  (21) edge node[right]{$k_1(\textbf{y})$} (31)
  (31) edge  node[right]{$\textbf{y}$}(41)
  (41) edge (51)
  
  (39) edge (49)
  (49) edge (59)
  
  (12) edge (22)
  (22) edge (32)
  (32) edge (42)
  (42) edge (52)
  (39) edge node[above]{$\p M$} (31)
  
  (21) edge  node[above]{$\textbf{y}\p k_1(\Phi)$}(22)
  (22) edge (23)
  
  (31) edge node[above]{$\p\Phi$} (32)
  (32) edge (33)
  (38) edge (39)  
  
  (48) edge (49)
  (41) edge (42)
  (42) edge (43)
  (49) edge (41)
  (39) edge[dash pattern=on 2pt off 2pt]node[below]{$\beta$}(41);
\end{tikzpicture}
\end{center}
The commutativity in the right above square comes from the following fact. Writing down the matrix $k_1(\textbf{y})$ as follows: 
\[k_1(\textbf{y})=\begin{pmatrix}
y_1 & y_2 & \ldots  \\
-y_0 & 0 & \ldots \\
0 & -y_0 & \ldots \\
\vdots & 0 & \ldots \\
\vdots & \vdots & \ldots 
\end{pmatrix}\]
and similarly for $k_1(\Phi)$, it is direct computation to show that $\textbf{y}\p k_1(\Phi)=\p\Phi k_1(\textbf{y})$.

Hence, the image of the map $\beta=\textbf{y}\p M$ is exactly the ideal $\J(\xi)$ and we have that:$$\textnormal{Ann}(\I_{\bbW}/\I_{\mfK})\simeq\J.$$
Now we use the assumption that $Z$ is zero-dimensional. Since the statement is local and the formation of the symmetric algebra commutes with base change, we can assume that $\O_{\P}$ and $\O_{\mfK}$ are Gorenstein local rings. We apply \cite[Theorem 21.23.a.]{eisenbud1995algebra} to the Gorenstein scheme $\mfK$ and to the ideal sheaf $\IPIK$. 

We denote by $\I_{\bbW,\mfK}$ the ideal of $\bbW$ in $\mfK$. Since $\bbW$ has codimension $0$ in $\mfK$ and has no embedded components, the ideals $\I_{\bbW,\mfK}$ and $\IPIK$ are linked in $\O_\mfK$. This shows that $\I_{\bbW,\mfK}=\textnormal{Ann}(\IPIK)$. Now, since we have already $\I_{\mfK}\subset \IW$, the equality occurs as ideal sheaves of $\O_{\P}$ itself. Moreover we have the isomorphism $\textnormal{Ann}(\IPIK)\simeq(\I_{\mfK}:\J)$. Hence:
\[\IW=\p\IZ\simeq(\I_{\mfK}:\J).\]
\end{proof}

\begin{proof}[Proof of \Cref{resQuotient}]
As above, we can assume that $\O_{\P}$ and $\O_{\mfK}$ are Gorenstein local rings and we apply \cite[Theorem 21.23]{eisenbud1995algebra} to $\O_{\mfK}$. We denote again by $\IPIK$ the ideal of $\PI$ in $\mfK$ and by $\I_{\bbW,\mfK}$ the ideal of $\bbW$ in $\mfK$ (recall that $\bbW=\p Z$).

Since $\IPIK$ has codimension $0$ in $\O_{\mfK}$, we have that $(\I_{\mfK}:\J)$ and $\I_{\PI,\mfK}$ are linked. But following the notation in \Cref{quotientIdeals}, $(\I_{\mfK}:\I_{\PI})\simeq \I_{\bbW,\mfK}$.

Moreover, $\bbW$ is Cohen-Macaulay as a pull back of $Z$ so $\PI$ is also Cohen-Macaulay and we have: 
\[ \I_{\bbW,\mfK}\simeq\omega_{\PI}\]
where $\omega_{\PI}$ is the canonical sheaf of $\PI$. Summing up, we have that:
 
\[ \omega_{\bbW}\otimes\omega_{\mfK}^\vee\simeq\I_{\PI,\mfK}\simeq \J/\I_{\mfK}. \]

Now, since $\bbW\simeq \P^n_Z$, we have $\omega_{\bbW}\simeq\p\omega_Z\otimes\O_{\P}(-(n+1)\xi)$. Therefore, by \Cref{resKoszHull}:
$$\J/\I_{\mfK}\simeq \omega_{\bbW}\otimes\omega_{\mfK}^\vee\simeq \p(\omega_Z\otimes\omega_X^\vee)\otimes\O_{\P}(-n\eta-\xi).$$
\end{proof}

Denoting $\H_1(\IZ)$ the first Koszul homology associated to $\Phi:\tnV\otimes\O_X(-\eta)\rightarrow\O_X$, as in \eqref{KoszulPres}, we emphasize the following point in order to elucidate the nature of the sheaf $\J/\I_{\mfK}$.
\begin{prop}\label{Homology}
The sheaf $\J/\I_{\mfK}$ is isomorphic to the pull-back of the first homology $\H_1(\IZ)$ of $\Phi$ up to a shift. More precisely, we have \[\J/\I_{\mfK}\simeq\p\H_1(\IZ)\otimes\O_{\P}(\eta-\xi).\]
\end{prop}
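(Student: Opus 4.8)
The plan is to deduce the statement from \Cref{resQuotient} together with the self-duality of the Koszul complex. By \Cref{resQuotient} we already know that $\J/\I_\mfK\simeq\p(\omega_Z\otimes\omega_X^\vee)\otimes\O_\P(-n\eta-\xi)$, so it suffices to establish the purely $X$-theoretic isomorphism
\[\omega_Z\simeq\H_1(\IZ)\otimes\omega_X\otimes\L^{n+1}.\]
Indeed, pulling this identity back by $\p$ and twisting by $\O_\P(-n\eta-\xi)$ rewrites the first description as $\p\H_1(\IZ)\otimes\O_\P(\eta-\xi)$, since $\O_\P(\eta)=\p\L$; this is exactly the asserted shape. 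Thus all the geometric bookkeeping is already contained in \Cref{resQuotient}, and the remaining content is the algebraic identity on $X$.

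To prove the displayed identity I would work with the Koszul complex $K_\bullet=K_\bullet(\Phi)$ of the map $\Phi\colon\mcP_1=\tnV\otimes\O_X(-\eta)\to\O_X$, a bounded complex of locally free sheaves of rank $n+1$ with $K_i=\overset{i}{\wedge}\mcP_1$. Since $Z$ is zero-dimensional, the ideal $\IZ$ has grade $n$, and the depth-sensitivity of the Koszul complex forces $\H_i(K_\bullet)=0$ for $i\ge 2$. Hence the only nonzero homology sheaves are $\H_0(K_\bullet)=\O_Z$ and $\H_1(K_\bullet)=\E/\F_1=\H_1(\IZ)$, exactly as in \eqref{KoszulPres}. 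Moreover $\H_1(\IZ)$ is supported on $Z$: away from $Z$ one of the $\phi_i$ is a unit, so the Koszul complex is acyclic there in positive degrees. Therefore both nonzero homology sheaves are supported on the zero-dimensional scheme $Z$.

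The key input is the self-duality of the Koszul complex. Dualizing term by term and using $\overset{i}{\wedge}\mcP_1^\vee\simeq\overset{\,n+1-i}{\wedge}\mcP_1\otimes(\det\mcP_1)^\vee$ together with $\det\mcP_1=\det\tnV\otimes\L^{-(n+1)}$, one obtains an isomorphism of complexes $\Hom(K_\bullet,\O_X)\simeq K_{\,(n+1)-\bullet}\otimes\L^{n+1}\otimes(\det\tnV)^\vee$. I would then compute $\mR\Hom(K_\bullet,\omega_X)=\Hom(K_\bullet,\omega_X)$, which is legitimate as $K_\bullet$ is a bounded complex of locally free sheaves, in two ways. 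On one hand, the self-duality identifies its degree-$n$ cohomology sheaf with $\H_1(\IZ)\otimes\omega_X\otimes\L^{n+1}\otimes(\det\tnV)^\vee$. On the other hand, because $K_\bullet$ has cohomology in only two adjacent degrees, there is a distinguished triangle relating $\H_1(\IZ)[1]$, $K_\bullet$ and $\O_Z$; applying $\mR\Hom(-,\omega_X)$ and using that $Z$ is Cohen--Macaulay of codimension $n$ and that $\H_1(\IZ)$ is supported in dimension $0$, every $\Ext$-sheaf of $\O_Z$ and of $\H_1(\IZ)$ into $\omega_X$ vanishes outside degree $n$. The two contributions then land in the separate cohomological degrees $n$ and $n+1$, so the degree-$n$ cohomology of $\mR\Hom(K_\bullet,\omega_X)$ equals $\Ext^n(\O_Z,\omega_X)=\omega_Z$. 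Equating the two computations in degree $n$ gives $\omega_Z\simeq\H_1(\IZ)\otimes\omega_X\otimes\L^{n+1}\otimes(\det\tnV)^\vee$, and trivializing $\det\tnV$ by the chosen basis $(\phi_0,\dots,\phi_n)$ yields the required identity.

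The main obstacle is the bookkeeping of twists: one must match the powers of $\L$ and the auxiliary line $\det\tnV$ produced by the Koszul self-duality against the shifts $\eta$ and $\xi$ coming from \Cref{resQuotient}, and check that the orientation of the self-duality isomorphism is compatible with the differentials up to sign. The only genuinely non-formal point is the vanishing of $\Ext^p(\H_1(\IZ),\omega_X)$ for $p<n$, which rests on the fact, special to $\dim(Z)=0$, that $\H_1(\IZ)$ is supported in dimension $0$. This is precisely where the zero-dimensionality hypothesis enters, and it is what makes the two-term homology of $K_\bullet$ collapse cleanly under duality into a single identification in degree $n$.
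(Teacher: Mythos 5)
Your proof is correct, and its skeleton matches the paper's: both reduce, via \Cref{resQuotient}, to the purely $X$-theoretic identity $\omega_Z\simeq\H_1(\IZ)\otimes\omega_X\otimes\L^{n+1}$, and both deduce that identity from the self-duality of the Koszul complex of $\Phi$, the concentration of its homology in degrees $0$ and $1$, and the codimension-$n$ support of $\O_Z$ and $\H_1(\IZ)$. Where you differ is in the execution, and your version buys a genuine simplification. The paper applies $\Hom(-,\omega_X)$ to the Koszul complex and walks the $\Ext$ sheaves through the syzygies, $\Ext^1(\F_{n-1},\omega_X)\simeq\Ext^{n-1}(\F_{1},\omega_X)\simeq\Ext^{n}\bigl(\H_1(\IZ),\omega_X\bigr)$ (the last step resting on $\Ext^{n-1}(\E,\omega_X)=\Ext^{n}(\E,\omega_X)=0$), then uses the transpose symmetry between the first and last Koszul differentials to obtain $\Ext^n\bigl(\H_1(\IZ),\omega_X\bigr)\simeq\O_Z\otimes\omega_X\bigl((n+1)\eta\bigr)$. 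In your indexing this is the comparison in cohomological degree $n+1$: it computes the dual of $\H_1(\IZ)$ rather than $\H_1(\IZ)$ itself, so the paper must finish with one further application of duality for finite-length modules (the terse ``This implies'' closing its proof). You instead compare the two computations of $\mR\Hom(K_\bullet,\omega_X)$ in degree $n$, where the truncation triangle contributes $\Ext^n(\O_Z,\omega_X)=\omega_Z$ and termwise self-duality contributes $\H_1(\IZ)\otimes\omega_X\otimes\L^{n+1}\otimes(\det\tnV)^\vee$; equating these gives the identity directly, with no biduality step, and your degree-$(n+1)$ comparison would recover the paper's intermediate isomorphism as a by-product. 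Your packaging also isolates exactly where $\dim(Z)=0$ enters --- the vanishing of $\Ext^p\bigl(\H_1(\IZ),\omega_X\bigr)$ for $p\neq n$ --- which in the paper is distributed over several syzygy-level vanishing claims.
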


\begin{proof}
To shorten the notation, we set $\H_1$ for $\H_1(\IZ)$. We are going to show that \begin{equation}\label{isoKoszul}
\H_1\simeq\omega_Z\otimes \omega_X^\vee\Bigl(-(n+1)\eta\Bigr).
\end{equation}

First, $\omega_Z\simeq\Ext^n(\O_Z,\omega_X)$. Hence, we will prove \eqref{isoKoszul} by showing that \[\O_Z\simeq \Ext^n(\H_1,\omega_X)\otimes\omega_X^\vee\Bigl( -(n+1)\eta\Bigr).\]

To this end, let:
\begin{equation*}\label{KoszulComplex}\stepcounter{SExactes}\tag{K\theSExactes}
\begin{tikzcd}[row sep=0.1cm,column sep=0.4cm,minimum width=2em]
0 \ar{r}& \overset{n+1}{\wedge}\mcP_1 \ar{r}{k_n(\Phi)}& \ldots \ar{dr}\ar{rr}&      & \overset{2}{\wedge}\mcP_1\ar{dr}\ar{rr}{k_1(\Phi)}&  &\mcP_1 \ar{r}{\Phi}& \IZ \ar{r}& 0 \\
        &               &             &  \F_2 \ar{ur} &      &  \F_1\subset{\E}\ar{ur} &            &
\end{tikzcd}
\end{equation*}
be the Koszul complex associated with $\Phi=(\phi_0\;\ldots\;\phi_n)$, where $\overset{i}{\wedge}\mcP_1=(\wedge^{i}\tnV)\otimes \O_X(-i\eta)$. Since $\codim(Z,X) = \depth(\I_Z)=n$ the Koszul homology is concentrated in degree $1$ and by definition $\H_1=\E/\F_1$.

Applying the functor $\Hom(-,\omega_X)$ to \eqref{KoszulComplex}, we obtain:
\begin{small}
\begin{equation*}
\begin{tikzcd}[row sep=0.1cm,column sep=0.3cm,minimum width=2em]
 0  \ar{r}& \Hom(\F_1,\omega_X) \ar{r}& \ldots \ar{r}&\tnV\otimes \omega_X(n\eta)  \ar{r}& \omega_X\Bigl((n+1)\eta\Bigr) \ar{r}&\Ext^1(\F_{n-1},\omega_X)\ar{r}& 0
\end{tikzcd}
\end{equation*}
\end{small}
and it is a computation to show that $\Ext^1(\F_{n-1},\omega_X)\simeq \Ext^{n-1}(\F_{1},\omega_X)$.

The last point is that $\Ext^{n-1}(\F_{1},\omega_X) \simeq \Ext^n(\H_1,\omega_X)$. Indeed, by the long exact sequence associated to the short exact sequence:
\begin{equation*}
\begin{tikzcd}[row sep=0.1cm,column sep=0.4cm,minimum width=2em]
  0  \ar{r}&  \F_1  \ar{r}& \E \ar{r}& \H_1 \ar{r}&0
\end{tikzcd}
\end{equation*}

we have the following exact sequence:
\begin{equation*}
\begin{tikzcd}[row sep=0.1cm,column sep=0.4cm,minimum width=2em]
\Ext^{n-1}(\E,\omega_X) \ar{r}& \Ext^{n-1}(\F_1,\omega_X) \ar{r}& \Ext^{n}(\H_1,\omega_X) \ar{r}& \Ext^{n}(\E,\omega_X) 
\end{tikzcd}
\end{equation*}
and $\Ext^{n-1}(\E,\omega_X) =\Ext^{n}(\E,\omega_X)=0$ since $Z$ is locally Cohen-Macaulay.

Moreover, the last map $k_n(\Phi)$ of the Koszul complex is the transpose of the first map $\Phi$ up to signs. Thus the maps in the sequence: \[\tnV\otimes\omega_X(n\eta)\xrightarrow{}\omega_X\Bigl((n+1)\eta\Bigr)\rightarrow \Ext^n(\H_1,\omega_X)\rightarrow 0\]
are the same as the maps in the exact sequence:
\[\mcP_1\xrightarrow{\Phi} \O_X\rightarrow \O_Z\rightarrow 0. \]
Taking care of the twisting, this means that $\O_Z\otimes\omega_X\Bigl((n+1)\eta\Bigr)\simeq \Ext^n(\H_1,\omega_X)$.

This implies $\H_1\simeq\omega_Z\otimes \omega_X^\vee\Bigl(-(n+1)\eta\Bigr)$.
\end{proof}

\begin{rem}
To enlighten the construction of the sheaves $\mcP_i'$ for $i\in\lbrace 1,\ldots, n+1\rbrace$ in the following proof of \Cref{thmSubLin}, recall that the complex:
\begin{equation}\tag{\ref*{resGradZ}}
\begin{tikzcd}[row sep=0em,column sep=0.5cm,minimum width=2em]
0 \ar{r}& \mcP_n  \ar{r}& \ldots \ar{r}& \mcP_1 \ar{r}& \mcP_0 \ar{r}& \O_{Z} \ar{r}& 0
\end{tikzcd}
\end{equation}
is a locally free resolution of $\O_Z$. Hence, a locally free resolution of $\omega_Z$ reads :
\[
\begin{tikzcd}[row sep=0em,column sep=0.5cm,minimum width=2em]
0 \ar{r}& \mcP_0^\vee\otimes\omega_X  \ar{r}& \ldots \ar{r}& \mcP_n^\vee\otimes\omega_X \ar{r}&\omega_{Z} \ar{r}& 0
\end{tikzcd}
\]
from which we can read a locally free resolution of $\omega_Z\otimes \omega_X^\vee$.
\end{rem}

\begin{proof}[Proof of \Cref{thmSubLin}]
As we saw in \Cref{lemmaFibreNLCI} and in the proof of \Cref{resQuotient}, $\PI$ is Cohen-Macaulay of dimension $n$.

Moreover, by \Cref{resKoszHull} and \Cref{resQuotient}, we have the following commutative diagram:
\begin{center}
\begin{tikzpicture}
  \matrix (m) [row sep=1em,column sep=1.5em,minimum width=2em]
  {
\node(){}; &\node(){}; &\node(){}; &\node(){}; &\node(){}; &\node(){}; &\node(z){$0$}; &\node(){}; \\

\node(){}; &\node(a1){$0$}; &\node(a){$\mfQ_n$}; &\node(b){$\ldots$}; &\node(c){$\mfQ_2$}; &\node(d){$\mfQ_1$}; &\node(e){$\I_{\mfK}$}; &\node(f){$0$}; \\

\node(){}; &\node(g1){}; &\node(g){}; &\node(h){}; &\node(i){}; &\node(j){}; &\node(k){$\I_{\PI}$}; & \node(l){}; \\

\node(s0){$0$}; &\node(s1){$\mcP_{n+1}'$}; &\node(s){$\mcP_n'$}; &\node(n){$\ldots$}; &\node(o){$\mcP_2'$}; &\node(p){$\mcP_1'$}; &\node(q){$\J/\I_\mfK$}; & \node(r){$0$.}; \\

\node(){}; &\node(){}; &\node(){}; &\node(){}; &\node(){}; &\node(){}; &\node(y){$0$}; & \node(){}; \\
     };
  \path[-stealth]
  	(s0) edge (s1)
  	(s1) edge (s)
    (a1) edge (a)
    (s1) edge (s)
  	(a) edge (b)
  	(b) edge (c)
  	(c) edge (d)
  	(d) edge (e)
  	(e) edge (f)
  	(e) edge (k)
  	(k) edge (q)
  	(s) edge (n)
  	(n) edge (o)
  	(o) edge (p)
  	(p) edge (q)
  	(q) edge (r)
  	(z) edge (e)
  	(q) edge (y);
\end{tikzpicture}
\end{center}
where
\[ \mfQ_i=\overset{i-1}{\underset{j=0}{\oplus}}\Big((\overset{i+1}{\wedge}\tnV)\otimes \O_{\P}(-(j+1)\xi-(i-j)\eta)\Big)\]
and
\[\mcP_i'=\p\mcP_{n+1-i}^\vee\otimes\O_{\P}(-n\eta-\xi)\hspace{0.5cm}\text{for }1\leq i\leq n+1.\]

To show that these resolutions patch together to give the desired resolution of $\J$, it suffices to prove that $\textnormal{Ext}^1\Bigl(\mcP_1',\I_\mfK\Bigr)=0$ that is $\textnormal{H}^1\Bigr(\P,\I_{\mfK}\otimes \mcP_1'^\vee\Bigr)=0 $.

Hence it suffices that $\textnormal{H}^i\left(\P,\mfQ_i \otimes \mcP_1'^\vee\right)=0$ for all $i\in \lbrace 1,\ldots, n\rbrace$. Kunneth formula implies these vanishings since the cohomology groups $\tnH^i\Big(\Pnk,\O_{\Pnk}(-j)\Big)$ vanish for all $j=0,\ldots, i-1$. In the case $i=n$, we use that \[\tnH^n\Big(\Pnk,\O_{\Pnk}(-j)\Big)\simeq \tnH^0\Big(\Pnk,\O_{\Pnk}(j-n-1)\Big)\] and the fact that $ j-n-1\leq -2$.

This shows eventually \Cref{thmSubLin}.
\end{proof}

We summarize \Cref{thmSubLin} into the following corollary.
\begin{cor}
Under the assumption that $\dim(Z)=0$, the ideal $\I_{\PI}$ has a resolution of the following form:
\begin{equation*}
\begin{tikzcd}[row sep=3em,column sep=1em,minimum width=2em]
0 \ar{r}& \mathcal{G}_{n+1} \ar{r}& \mathcal{G}_n \ar{r}& \ldots \ar{r}& \mathcal{G}_2 \ar{r}& \mathcal{G}_1 \ar{r}& \I_{\PI} \ar{r}& 0
\end{tikzcd}
\end{equation*}
where $\mathcal{G}_i=\underset{j=1}{\overset{i}{\oplus}}\p\mathcal{T}_{ij}\otimes \O_{\P}(-j\xi)$ when $i\in\lbrace 1,\ldots,n\rbrace$ and $\mathcal{G}_{n+1}= \p\mathcal{T}_{n}\otimes \O_{\P}(-\xi)$ for some locally free sheaves $\mathcal{T}_{ij}$ and $\mathcal{T}_{n}$ over $X$.
\end{cor}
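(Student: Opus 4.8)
The plan is to deduce the corollary directly from the resolution \eqref{ESthmSubLin} of \Cref{thmSubLin}, regrouping its terms according to the power of the relative twist $\xi$ rather than the homological degree alone. The Cohen--Macaulayness and the length $n+1$ of the complex are already recorded in \Cref{thmSubLin}, so no new homological input is needed; what remains is purely a statement about the shape of the individual terms.

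First I would unpack the two families of sheaves appearing in \eqref{ESthmSubLin}. For the Eagon--Northcott summands, starting from
\[ \mfQ_{i,j}=(\overset{i+1}{\wedge}\tnV)\otimes \O_{\P}\bigl(-(j+1)\xi-(i-j)\eta\bigr),\qquad 0\le j\le i-1,\]
I would use that in this context $\eta=\p c_1(\L)$ is pulled back from $X$ and that the vector space $\overset{i+1}{\wedge}\tnV$ determines a trivial bundle; both factors other than the $\xi$-twist therefore descend to $X$, giving
\[ \mfQ_{i,j}\simeq \p\Bigl((\overset{i+1}{\wedge}\tnV)\otimes \O_X\bigl(-(i-j)\eta\bigr)\Bigr)\otimes \O_{\P}\bigl(-(j+1)\xi\bigr).\]
For the remaining summands, from $\mcP_i'=\p\mcP_{n+1-i}^\vee\otimes\O_{\P}(-n\eta-\xi)$ I would likewise pull the $\eta$-twist back to $X$, obtaining
\[ \mcP_i'\simeq \p\bigl(\mcP_{n+1-i}^\vee\otimes\O_X(-n\eta)\bigr)\otimes \O_{\P}(-\xi),\]
a term carrying only the single twist $\O_{\P}(-\xi)$.

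Next I would reindex by writing the relative twist as $-j\xi$ with $j$ one more than the Eagon--Northcott index, so that as the latter runs over $\{0,\ldots,i-1\}$ the new index $j$ runs over $\{1,\ldots,i\}$. Collecting the pullback factors, for $1\le i\le n$ the term $\mathcal{G}_i=\mfQ_i\oplus\mcP_i'$ of \eqref{ESthmSubLin} becomes $\bigoplus_{j=1}^i\p\mathcal{T}_{ij}\otimes\O_{\P}(-j\xi)$, where for $2\le j\le i$ only the Eagon--Northcott contribution appears, $\mathcal{T}_{ij}=(\overset{i+1}{\wedge}\tnV)\otimes\O_X(-(i-j+1)\eta)$, while the lone $\mcP'$-summand always lands in the $\O_{\P}(-\xi)$-graded piece and merges there to give $\mathcal{T}_{i1}=\bigl((\overset{i+1}{\wedge}\tnV)\otimes\O_X(-i\eta)\bigr)\oplus\bigl(\mcP_{n+1-i}^\vee\otimes\O_X(-n\eta)\bigr)$. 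The top term $\mathcal{G}_{n+1}=P'_{n+1}=\mcP_{n+1}'$ carries only $\O_{\P}(-\xi)$, so it already has the asserted form with $\mathcal{T}_n=\mcP_0^\vee\otimes\O_X(-n\eta)=\O_X(-n\eta)$.

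I do not expect a genuine obstacle here: once one observes that both the wedge powers of $\tnV$ and the class $\eta=\p c_1(\L)$ descend to $X$, every summand of \eqref{ESthmSubLin} factors as a pullback from $X$ tensored with a single relative twist $\O_{\P}(-j\xi)$, and the corollary follows by inspection. The only point requiring care is the reindexing that shifts the Eagon--Northcott index by one, together with the observation that the $\mcP'$-terms occur exclusively in the $\O_{\P}(-\xi)$ piece.
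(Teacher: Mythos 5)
Your proposal is correct and follows essentially the same route as the paper, which states this corollary as a direct summary of \Cref{thmSubLin}: every summand $\mfQ_{i,j}$ and $\mcP_i'$ of \eqref{ESthmSubLin} is a pullback from $X$ tensored with a single twist $\O_{\P}(-(j+1)\xi)$ or $\O_{\P}(-\xi)$, and regrouping by the power of $\xi$ gives the sheaves $\mathcal{T}_{ij}$ and $\mathcal{T}_n$ exactly as you compute them. Your explicit identifications (including $\mathcal{T}_{i1}$ absorbing both the $j=0$ Eagon--Northcott piece and the $\mcP'$-summand, and $\mathcal{T}_n=\O_X(-n\eta)$ since $\mcP_0=\O_X$) are accurate and fill in the bookkeeping the paper leaves implicit.
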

\section{Graded free resolution of the symmetric algebra}
Now, we turn to the analysis of a resolution of the symmetric algebra of a homogeneous ideal of the polynomial ring $R=\k[x_0,\ldots,x_n]$. So let $I_Z=(\phi_0,\ldots,\phi_n)\subset R$ be an ideal generated by $n+1$ linearly independent homogeneous polynomials each one of the same degree $\eta\geq 2$. We will denote by $R_Z$ the quotient $R/I_Z$ and by $Z$ the subscheme $\V(I_Z)$ of $\Pnk$.

We will assume that $\dim(Z)=0$ and that $R_Z$ is a graded Cohen-Macaulay ring.

As above let:
\begin{equation}\label{resGradZ}\tag{\text{$P_{\bullet}$}}
\begin{tikzcd}[row sep=3em,column sep=0.5cm,minimum width=2em]
0 \ar{r}& P_n \ar{r}& \ldots \ar{r}& P_2 \arrow[r, "M"]& P_1 \ar{r}& I_Z \ar{r}& 0
\end{tikzcd}
\end{equation}
be a minimal graded free resolution of $I_Z$, $M$ being the presentation matrix of $I_Z$ and $P_1=R(-\eta)^{n+1}$.

As in the previous section, let $k_1(\Phi):\wedge^2 P_1\rightarrow P_1$ be the second differential of the Koszul complex associated with the map $\Phi:P_1\xrightarrow{(\phi_0\;\ldots \;\phi_n)}R$. Put $F=\textnormal{Im}\Big(k_1(\Phi)\Big)$ in order to have the following exact sequence:
\begin{center}
\begin{tikzpicture}
  \matrix (m) [row sep=0.1cm,column sep=0.75cm,minimum width=2em]
  {
     \node(a11){$R(-2\eta)^{\binom{n+1}{2}}$}; &\node(a12){}; &\node(d){$R(-\eta)^{n+1}$}; &\node(e){$I_Z$}; & \node(f){$0$.}; \\
     \node(a21){}; &\node(a22){$F $}; &\node(a23){}; &\node(){}; & \node(){}; \\
     
     \node(a31){$0$}; &\node(a32){}; &\node(a33){$0$}; &\node(){}; & \node(){}; \\};
  \path[-stealth]
  	(a31) edge (a22)
  	(a22) edge (a33)
 	(a11) edge (a22)
 	(a22) edge (d)
    (a11) edge node[above] {$k_1(\Phi)$}(d)
    (d) edge node[above]{$\Phi$} (e)
    (e) edge (f);
\end{tikzpicture}
\end{center}
\begin{defi}
Set $S=R[y_0,\ldots,y_n]$ and $\textbf{y}=(y_0\;\dots\;y_n)$. We let $I_{\PI}$ be the ideal of $S$ generated by the entries in the row matrix $\textbf{y}M$ and $I_\mfK$ be the ideal of $S$ generated by the entries in the row matrix $\textbf{y} k_1(\phi)$.
\end{defi}

Here, as above, $F\subset E$ so $I_\mfK\subset I_{\PI}$.

\begin{notation}
Since $S$ is bigraded by the variables $\textbf{x}$ and $\textbf{y}$, $S(-a,-b)$ stands for a shift in $\textbf{x}$ for the left part and $\textbf{y}$ for the right part.

As above, we denote by $\P$ the product $\Pnk\times \Pnk$ and by $p:\P^n\times\P^n\rightarrow \P^n$ the first projection.
\end{notation}

To show \Cref{thmSubRegMod}, the strategy is initially the same as in the previous section, but since we are dealing with free resolutions, the resolutions of $I_\mfK$ and $I_{\PI}/I_\mfK$ will patch together providing a resolution of $I_{\PI}$ without further checking. We will explain afterwards how we deduce from this resolution a minimal bigraded free resolution of $I_{\PI}$.

\subsection{The Koszul hull}
All the arguments of the proof of \Cref{resKoszHull} remain valid in the graded homogeneous setting. So the ideal $I_\mfK$ has the following properties:
\begin{enumerate}[label=\rm{\it(\roman*)}]
\item\label{KoszulGrad1} $I_\mfK$ is a determinantal ideal.
\end{enumerate}
Under the assumption that $\codim(Z,\Pnk)=n$:
\begin{enumerate}[resume,label=\rm{\it(\roman*)}]
\item\label{KoszulGrad2} $\codim(\mfK,\P)=n$.
\item\label{KoszulGrad3} a graded free resolution of $I_\mfK$ is the Eagon-Northcott complex associated to the matrix:
\[\psi=\begin{pmatrix}
\phi_0 & \ldots & \phi_n \\
y_0 & \ldots & y_n
\end{pmatrix}.\]
Hence, the following complex is a bigraded free resolution of $I_\mfK$:
\begin{equation}\stepcounter{SExactes}\tag{\text{$Q_\bullet$}}\label{GrMinResK}
\begin{tikzcd}[row sep=3em,column sep=1.5em,minimum width=2em]
0 \ar{r}& Q_n \ar{r}& \ldots \ar{r}& Q_2 \ar{r}& Q_1 \ar{r}& I_{\mfK} \ar{r}& 0
\end{tikzcd}
\end{equation}
where $Q_i=\overset{i}{\underset{j=0}{\oplus}}Q_{i,j}$ and \[Q_{i,j}=S\Big(-(i-j)\eta,-j-1)^{\binom{n+1}{i+1}} \hspace{0.5cm}\text{for }1\leq i\leq n\text{ and } 0\leq j\leq i-1. \]
\item\label{KoszulGrad4} The scheme $\mfK$ is Gorenstein, more precisely the canonical module $\omega_{S_{\mfK}}$ of $\mfK$ verifies: \[\omega_{S_{\mfK}}\simeq S\Big( n(\eta-1)-1,-n\Big).\]
\end{enumerate}

\subsection{Identification of the quotient $I_{\PI}/I_\mfK$}

We denote by $\omega_{R_Z}$ the canonical module of $Z$. All the arguments of \Cref{PropKozulCanonic} and \cite[Theorem 21.23]{eisenbud1995algebra} apply in the graded case since $R_Z$ is a graded Cohen-Macaulay ring of depth $n$. Hence we have that:
\[ I_{\PI}/I_{\mfK}\simeq \omega_{R_Z}\otimes S(n(1-\eta)+1,-1)\text{ as }S\text{-modules}.\]

Recall that \eqref{resGradZ} is a minimal graded free resolution of $I_Z$. Put \[P_i'= P_{n+1-i}^\vee\otimes S\Big(-n\eta,-1) \hspace{0.5cm}\text{for }i\in \lbrace 1,\ldots,n+1\rbrace . \]

Then the complex:
\begin{equation*}\label{resGradNonMin1}\tag{\ref*{ThResGrad}'}
\begin{tikzcd}[row sep=3em,column sep=0.5cm,minimum width=2em]
0 \ar{r}& P_{n+1}' \ar{r}& \begin{matrix}
     Q_n \\\oplus  \\ P_n'
\end{matrix} \arrow{r} &\ldots \arrow{r} &\begin{matrix}
     Q_2 \\\oplus  \\ P_2'
\end{matrix} \arrow{r} &\begin{matrix}
     Q_1 \\\oplus  \\ P_1'
\end{matrix} \arrow{r}& I_{\PI} \ar{r}& 0
\end{tikzcd}
\end{equation*}
is a bigraded free resolution of $I_{\PI}$.

\subsection{Homotopy of complexes}
We turn now to the problem of extracting a minimal bigraded free resolution of $I_{\PI}$ from \eqref{resGradNonMin1}. In order to do so, we show first the following result.

\begin{prop}\label{pfIX}
There is a canonical isomorphism
\[p_{*}\O_{\PI}(\xi)\simeq \I_Z\]
where $\O_{\PI}(\xi)$ and $\I_Z$ are the sheafification of respectively $S(0,1)$ and $I_Z$.
\end{prop}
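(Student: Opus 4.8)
The plan is to compute $p_{*}\O_{\PI}(\xi)$ as the zeroth direct image of a locally free resolution of $\O_{\PI}(\xi)$, and then to recognise the resulting cokernel as $\IZ$ through the presentation of $I_Z$. I would start from the bigraded free resolution \eqref{resGradNonMin1} of $I_{\PI}$ obtained above and splice onto it the tautological sequence $0\to I_{\PI}\to S\to S/I_{\PI}\to 0$, getting a free resolution of $S/I_{\PI}$. Sheafifying on $\P=\Pnk\times\Pnk$ and twisting by $\O_{\P}(\xi)=\O_{\P}(0,1)$ yields a locally free resolution $\F_\bullet\to\O_{\PI}(\xi)$ whose terms are the sheafifications of $S(0,1)$, of $(Q_i\oplus P_i')(0,1)$ for $1\le i\le n$, and of $P'_{n+1}(0,1)$.

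The first step is to verify that each term of $\F_\bullet$ is $p$-acyclic. Writing $Q_{i,j}(0,1)$ as $S(-(i-j)\eta,-j)^{\binom{n+1}{i+1}}$, its sheafification is $\O_{\Pnk}(-(i-j)\eta)\boxtimes\O_{\Pnk}(-j)$, and by the projection formula and the K\"unneth isomorphism only the summand $j=0$ survives $p_{*}$, contributing $\O_{\Pnk}(-i\eta)^{\binom{n+1}{i+1}}$, while all higher $\mR^{k}p_{*}$ vanish because $\tnH^{k}(\Pnk,\O_{\Pnk}(-j))=0$ for $1\le k\le n$ and $0\le j\le n-1$. The summands $P_i'(0,1)$ carry the trivial $y$-twist $\O_{\Pnk}(0)$ on the second factor, so $p_{*}$ returns $\widetilde{P_{n+1-i}^\vee}(-n\eta)$ with no higher direct image. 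In particular $p_{*}\F_0=p_{*}\O_{\P}(\xi)=\O_{\Pnk}^{\,n+1}$, spanned by $y_0,\dots,y_n$. Since $\F_\bullet$ is a resolution by $p$-acyclic sheaves, the complex $p_{*}\F_\bullet$ computes $\mR p_{*}\O_{\PI}(\xi)$; hence $p_{*}\O_{\PI}(\xi)=\coker\big(p_{*}\F_1\to p_{*}\F_0\big)$, and as a byproduct $\mR^{k}p_{*}\O_{\PI}(\xi)=0$ for $k\ge1$.

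It remains to identify this cokernel. The differential $\F_1\to\F_0=\O_{\P}(\xi)$ sends the free generators to the degree-one (in $y$) generators of $I_{\PI}$, namely the entries of $\textbf{y}M$. Under $p_{*}$, a generator $\sum_i c_i(\textbf{x})\,y_i$ is recorded by its coefficient vector $(c_0,\dots,c_n)$ in $\bigoplus_i\O_{\Pnk}\,y_i=p_{*}\F_0$, so the map $p_{*}\F_1\to p_{*}\F_0$ is the sheafification of the presentation matrix $M$, up to the twist carried by the grading of the $y_i$. Its image is spanned by the columns of $M$, that is $\textnormal{Im}(M)=\ker(\Phi)$, so that
\[ p_{*}\O_{\PI}(\xi)\simeq\coker\big(\widetilde{P_2}\xrightarrow{\,M\,}\widetilde{P_1}\big)\simeq\IZ, \]
the last isomorphism being induced by the evaluation map $\Phi=(\phi_0\;\ldots\;\phi_n)$ of the presentation \eqref{resGradZ}; this is the asserted canonical isomorphism.

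The main obstacle I anticipate is the bookkeeping in the last step: one must match the pushed-forward differential with $M$ and track the internal $\O(\eta)$-shift carried by the variables $y_i$, so that the cokernel is exactly $\IZ$ and not a twist of it. A secondary technical point is the $p$-acyclicity of every term, which is what guarantees that applying $p_{*}$ to the resolution preserves exactness and lets one read $p_{*}\O_{\PI}(\xi)$ off as a plain cokernel; this is precisely where the vanishings $\tnH^{\ge 1}(\Pnk,\O_{\Pnk}(-j))=0$ in the relevant range of $j$ are used. Conceptually the statement simply says that $\Sym(\IZ)$ is already saturated in relative degree one, its degree-one piece being $\IZ$ itself, and the computation above is the concrete verification of this.
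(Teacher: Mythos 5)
Your route is viable and genuinely different from the paper's. The paper also begins by pushing a resolution through $p_*$ (it applies $p_*$ to \eqref{ESthmSubLin} to obtain $\mR^1p_*\J(\xi)=0$), but it concludes with a diagram chase: comparing $0\to\H\to\p\E\to\J(\xi)\to0$ with $0\to\J(\xi)\to\O_{\P}(\xi)\to\O_{\PI}(\xi)\to0$, the decisive input being that the canonical map $\I_Z\to p_{*}\O_{\PI}(\xi)$ is injective because it is an isomorphism off $Z$ and $\I_Z$ is torsion free; this forces $p_*\H=\mR^1p_*\H=0$. You instead apply $p_*$ to the whole spliced and twisted resolution \eqref{resGradNonMin1} and read the answer off as a cokernel. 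Your acyclicity check is correct (the second-factor twists $-j$ occurring in $Q_{i,j}(0,1)$ satisfy $0\le j\le i-1\le n-1$, so only $j=0$ survives and nothing higher), the hypercohomology principle you invoke is standard, and this route dispenses with the torsion-freeness argument while giving the vanishing of all $\mR^kp_*\O_{\PI}(\xi)$, $k\ge1$, as a byproduct.

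However, the final identification as written has a gap. The first differential of \eqref{resGradNonMin1} is \emph{not} $\textbf{y}M$: its source is $Q_1\oplus P_1'$, and it sends the $Q_1$-generators to the entries of $\textbf{y}k_1(\Phi)$ (the $2\times 2$ minors defining $I_{\mfK}$) and the $P_1'$-generators to lifts of generators of $I_{\PI}/I_{\mfK}$. So $p_*\F_1\to p_*\F_0$ is not the sheafification of $M$, and its source is not $\widetilde{P_2}$. What saves the argument is a statement about images, not maps: since $F_1\to I_{\PI}$ is a surjection of bigraded modules and both $Q_1$ and $P_1'$ carry $y$-shift exactly $-1$, the $y$-degree-one part of the image is all of $(I_{\PI})_{(*,1)}$, and under the identification $S_{(*,1)}\cong P_1(\eta)$ (sending $y_i$ to the $i$-th generator of $P_1$) one has $(I_{\PI})_{(*,1)}=\mathrm{Im}(M)(\eta)=\ker(\Phi)(\eta)$, writing $E=\mathrm{Im}(M)=\ker(\Phi)\subset P_1$. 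Hence the image sheaf of $p_*\F_1\to p_*\F_0$ coincides with that of $\widetilde{M}$, suitably twisted, and the cokernel is computed from \eqref{resGradZ}; this one-line repair should replace your claim that the pushed-forward differential ``is'' $M$. Finally, the twist you flag as the main obstacle does not cancel the way you assert: with your normalization $p_*\F_0=p_*\O_{\P}(0,1)=\O_{\Pnk}^{n+1}$, the cokernel is $\coker\bigl(\widetilde{E}(\eta)\hookrightarrow\O_{\Pnk}^{n+1}\bigr)\simeq\I_Z(\eta)$, not $\I_Z$; one lands on $\I_Z$ on the nose exactly when $\O_{\P}(\xi)$ is normalized as $\O_{\P(\O_X(-\eta)^{n+1})}(1)$, so that $p_*\O_{\P}(\xi)=\O_X(-\eta)^{n+1}$, which is the convention the paper's own proof uses. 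This tension between the bigraded normalization $S(0,1)$ and the geometric one is already present in the paper's statement, but your proof must fix one convention and carry the $\eta$-shift through explicitly rather than asserting it disappears.
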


We emphasize that this is not completely straight forward since $\PI$ is the Proj of $\I_Z$ which is not locally free (see Stack project, 26.21. Projective bundles, \href{https://stacks.math.columbia.edu/tag/01OA}{example 26.21.2}).

\begin{proof}
Since $\O_{\P}(\xi)$ is the relative ample line bundle of the projective bundle $\P=\P\Big(\O_X(-\eta)^{n+1}\Big)$, we have:
\[
\mR^kp_{*}\O_{\P}(l\eta -j\xi)=\begin{cases} 0 &\text{ for }l>0\text{ and } j\leq 0,\\
										0 & \text{ for }j\in\lbrace 1,\ldots,k-1\rbrace\text{ and any }l,\\
										\O_X(l\eta)&\text{ for }k=0\text{ and } j= 0,\\
										\O_X^{n+1}\Big((l-1)\eta\Big)&\text{ for }k=0\text{ and } j=-1.
\end{cases}\]

Therefore, applying $p_*$ to the resolution \eqref{ESthmSubLin} and chasing cohomology we get $\mR^1p_{*}\I_{\PI}(\xi)=0$. 

Recall that we denote by $\E$ the kernel of $\Phi:\O_{X}(-\eta)^{n+1}\rightarrow\I_Z$ and that $\I_{\PI}(\xi)$ is the image of the map $\p\E\rightarrow \O_{\P}(\xi)$. Let $\H$ be the kernel of this surjection and write the exact sequence:
\begin{equation*}
\begin{tikzcd}[row sep=3em,column sep=1em,minimum width=2em]
0 \ar{r}& \H \ar{r}& \p \E \ar{r}& \I_{\PI}(\xi) \ar{r}& 0.
\end{tikzcd}
\end{equation*}

Since $p_*\p\E\simeq \E$ and $\mR^1p_*\p\E=0$, applying $p_*$ to this exact sequence, we get:
\begin{equation}\label{pfX1}\tag{a}
\begin{tikzcd}[row sep=3em,column sep=1em,minimum width=2em]
0 \ar{r}& p_*\H \ar{r}& \E \ar{r}& p_*\I_{\PI}(\xi) \ar{r}& \mR^1 p_*\H \ar{r}& 0.
\end{tikzcd}
\end{equation}

Also, since we proved that $\mR^1p_*\I_{\PI}(\xi)=0$, applying $p_*$ to the canonical exact sequence
\begin{equation*}
\begin{tikzcd}[row sep=3em,column sep=1em,minimum width=2em]
0 \ar{r}& \I_{\PI}(\xi) \ar{r}& \O_{\P}(\xi) \ar{r}& \O_{\PI}(\xi) \ar{r}& 0
\end{tikzcd}
\end{equation*}
we get
\begin{equation*}\label{pfX2}\tag{b}
\begin{tikzcd}[row sep=3em,column sep=1em,minimum width=2em]
0 \ar{r}& p_*\I_{\PI}(\xi) \ar{r}& \O_X(-\eta)^{n+1} \ar{r}& p_*\O_{\PI}(\xi) \ar{r}& 0.
\end{tikzcd}
\end{equation*}

The exact sequences \eqref{pfX1} and \eqref{pfX2} fit into the following commutative diagram:
\begin{equation*}
\begin{tikzcd}[row sep=0.8em,column sep=1em,minimum width=2em]
 & 0 \ar{d}&  &  & & \\
 & p_{*}\H \ar{d}& & 0 \ar{d}& & \\
& \E \ar{d}\arrow[rr, "\simeq"]&  & \E \ar{d}& &\\
0 \ar{r}& p_{*}\I_{\PI}(\xi) \ar{rr}\ar{d}& & \O_X(-\eta)^{n+1} \ar{r}\ar{d}& p_{*}\O_{\PI}(\xi) \ar{r}\arrow[d,phantom,"{\rotatebox{90}{=}}"]& 0  \\
0 \ar{r} &\mR^1p_{*}\H \ar{rr}\ar{d}&  & \I_Z \ar{r}\ar{d}& p_{*}\O_{\PI}(\xi) \ar{r}& 0 \\
 & 0 &  & 0 &   &  \\
\end{tikzcd}
\end{equation*}
where \eqref{pfX1} is the left column, \eqref{pfX2} is the central row and the map $\I_Z\rightarrow p_*\O_{\PI}(\xi)$ in the bottom row is the canonical morphism associated to the projectivization of $\I_Z$. This morphism is an isomorphism at $X\backslash Z$ and therefore $\I_Z\rightarrow p_*\O_{\PI}(\xi)$ is injective because $\I_Z$ is torsion free. Hence $p_*\H\simeq 0\simeq \mR^1p_*\H$ and $p_{*}\O_{\PI}(\xi)\simeq \I_Z$.

\end{proof}

\begin{proof}[Proof of \Cref{thmSubRegMod}]
We work as in the previous proposition. Taking the pushforward by $p$ of the resolution of $\O_{\PI}(\xi)$ given by \eqref{ESthmSubLin} and considering the associated $R$-modules of global sections, we obtain the following graded free resolution of $I_Z$:

\begin{center}
\begin{tikzpicture}
  \matrix(m1)[row sep=0.1cm,column sep=0.5cm,minimum width=2em]
  {
     \node(a11){$0$}; &\node(a12){$P_0^\vee\Big(-(n+1)\eta\Big) $}; &\node(a13){$\begin{matrix}
    R\Big(-(n+1)\eta\Big) \\ \oplus  \\ P_0^\vee\Big(-(n+1)\eta\Big)
\end{matrix}$}; &\node(a14){$\ldots$}; &\node(){}; &\node(){}; \\};
  \path[-stealth]
    (a11) edge (a12)
    (a12) edge (a13)
    (a13) edge (a14);
\end{tikzpicture}

\begin{tikzpicture}
\matrix(m2)[row sep=0.1cm,column sep=0.5cm,minimum width=2em]
  { \node(a22){}; &\node(){}; &\node(a24){}; &\node(a25){$\ldots$}; &\node(a26){$\begin{matrix}
     R(-2\eta)^{\binom{n+1}{2}} \\\oplus  \\ P_n^\vee(-(n+1)\eta)
\end{matrix}$}; &\node(a27){$R(-\eta)^{n+1}$}; &\node(a28){$I_{Z}$}; &\node(a29){$0$.}; \\};
  \path[-stealth]
    (a25) edge (a26)
    (a26) edge (a27)
    (a27) edge (a28)
    (a28) edge (a29);
\end{tikzpicture}
\end{center}
 
This resolution is homotopic to the minimal free resolution \eqref{resGradZ} of $I_Z$. Therefore, the truncated complex $(P_{\geq 1})$ of \eqref{resGradZ} is homotopic as $S$-complex to:
\begin{equation*}
\begin{tikzcd}[row sep=3em,column sep=0.5cm,minimum width=2em]
0 \ar{r}& P'_{n-1} \ar{r}&  \begin{matrix}
Q_{n,0}\\ \oplus \\P_n'
\end{matrix} \ar{r}& \ldots \arrow[r]& \begin{matrix}
Q_{1,0}\\ \oplus \\P_1'
\end{matrix}.
\end{tikzcd}
\end{equation*}

Hence, \eqref{resGradNonMin1} is homotopic to:
\begin{equation}\label{ThResGradBis}\tag{\ref*{ThResGrad}}
\begin{tikzcd}[row sep=3em,column sep=0.5cm,minimum width=2em]
0 \ar{r}& Q''_{n} \ar{r}& \begin{matrix}
     Q''_{n-1} \\\oplus  \\ P''_{n-1}
\end{matrix} \arrow{r}& \begin{matrix}
     Q''_{n-2} \\ \oplus  \\ P''_{n-2}
\end{matrix} \arrow{r}& \ldots \arrow{r}& \begin{matrix}
     Q''_2 \\ \oplus  \\ P''_2
\end{matrix} \arrow{r}& P''_1 \ar{r}& I_{\PI} \ar{r}& 0
\end{tikzcd}
\end{equation}
where \[Q''_i=\overset{n}{\underset{j=1}{\oplus}}Q_{i,j}, \hspace{0.5cm} Q_{i,j}=S\Big(-(i-j)\eta,-j-1)^{\binom{n+1}{i+1}}, \hspace{0.5cm} P''_{i}= P_{i+1}\otimes S(\eta,-1).\]

The complex \eqref{ThResGradBis} is thus a bigraded free resolution of $I_{\PI}$.

To finish the proof of \Cref{thmSubRegMod}, it remains to show that \eqref{ThResGradBis} is minimal. This follows from the minimality of \eqref{resGradZ} and the fact that, if $i\neq i'$, there is no bigraded homogeneous piece of the same degree among $Q''_i$ and $Q_{i'}''$ or $P''_j$ for any $j\in\lbrace 1,\ldots, n-1\rbrace$.
\end{proof}

\bibliographystyle{alpha}

\end{document}